
%

\documentclass{amsart}
\usepackage{mathrsfs}
\usepackage{amsmath,amsthm,amssymb}

\usepackage{stmaryrd}

\usepackage{booktabs}

\newtheorem{theorem}{Theorem}[section]
\newtheorem*{theorem2}{Theorem}
\newtheorem{prop}[theorem]{Proposition}
\newtheorem{lemma}[theorem]{Lemma}
\newtheorem{cor}[theorem]{Corollary}

\theoremstyle{definition}

\newtheorem{example}[theorem]{Example}

\newtheorem{question}[theorem]{Question}

\theoremstyle{remark}

\numberwithin{equation}{section}

\newcommand*{\eqnumref}[1]{%
  \begingroup
    \eqref{#1}%
  \endgroup
}

\begin{document}

\title{Wild Singularities of Kummer Varieties}


\author{Benedikt Schilson}
\address{}
\curraddr{}
\email{}
\thanks{}



\keywords{}

\date{}

\dedicatory{}

\begin{abstract}
In characteristic $p=2$, we compute the singularities of Kummer varieties arising from products of elliptic curves. This result is generalized to Kummer varieties associated to ordinary abelian varieties.
\end{abstract}

\maketitle

\tableofcontents

\section*{Introduction}
Let $k$ be an algebraically closed field of characteristic $p \geq 0$ and let $A$ be an abelian variety over $k$ of dimension $g \geq 2$. The Kummer variety $A/\iota$ is by definition the quotient of $A$ by the action of the sign involution $\iota$ and this quotient acquires singularities coming from the $2$-torsion of $A$. If $p \neq 2$, then the singular locus of the Kummer variety consists of $2^{2g}$ closed points.  In the case $p=2$ the situation changes dramatically. Here the number of singular points of $A/\iota$ can vary and is at most $2^g$. In the case of Kummer surfaces, Katsura \cite{katsura} studied the singularities and their resolution. For Kummer varieties of higher dimension, no such result is known. The goal of this paper is to determine the singularities for an accessible class of examples, namely Kummer varieties arising from products of elliptic curves.

To proceed so, one has to look for a suitable open affine $\iota$-invariant subset $U \subset A$ containing exactly one point of order (at most) $2$. Under this assumption, the quotient $U/\iota$ exists and is the desired open affine neighbourhood of the chosen singular point in $A/\iota$. By a change of variables, the spectrum of the affine coordinate ring  $R=\Gamma(U, \mathscr{O}_A)$ can be considered as a closed subscheme of affine space $\mathbb{A}^{2g}_k$ such that the $\mathbb{Z}/2\mathbb{Z}$-action on $R$ coincides with the induced action of an involution on $k[x_1, y_1, \ldots, x_g, y_g]$.  In the case that $A$ is the product of ordinary elliptic curves, the involution is given by $x_i \mapsto x_i$, $y_i \mapsto y_i + x_i$. The ring of invariants in this setting was computed by Richman \cite{richman}. Now the decisive step  is to prove that in this case passing to the quotient of $\mathbb{A}^{2g}_k$ by the group action is compatible with taking the closed subscheme mentioned above. For arbitrary group actions this statement does not hold.

From the description of the affine coordinate ring of $U/\iota$ as an affine $k$-algebra one immediately gets the singularity by completing the ring with respect to the ideal of the singular point. By an argument of Katsura, the computation of the singularities can be extended to abelian varieties whose associated formal group is isomorphic to that of a product of elliptic curves. This is the case e.g. for all ordinary abelian varieties and for all abelian varieties with $2^{g-1}$ points of order at most $2$.

For the sake of simplicity, only the case of ordinary abelian varieties is formulated here: The completed local ring at a singular point has a set of generators $T_{M}$, $X_i$ with $M \subset \lbrace 1, \ldots, g \rbrace$, $i = 1, \ldots, g,$ satisfying the relations
\begin{align*}
  &T_\emptyset=0, \quad \quad  X_i - T_{\lbrace i \rbrace}=0\, ,\quad \quad
  \sum_{L \subsetneq D} X_{D \smallsetminus L}\, T_L =0, \\
&T_A T_B + \sum_{L \subsetneq A \cap B} X_{(A \cap B) \smallsetminus L}\ P_L  \ T_{(A \cup B) \smallsetminus L} \\
&\quad \quad \quad \quad + P_{A \cap B}   \sum_{M \subsetneq A \smallsetminus B} X_{(A \smallsetminus B) \smallsetminus M} \ T_{M \cup (B \smallsetminus A)}=0
\end{align*}
for $1 \leq i \leq g$,  $A,B,D \subset \lbrace 1, \ldots, g \rbrace$ with $|D| \geq 3$ and $|A|, |B| \geq 2$. Here we use the notation
$$
X_M = \prod_{i \in M} X_i, \quad \quad P_M = \prod_{i \in M} (X_i^3+X_i).
$$

Now the  main result reads as follows:
\begin{theorem2}
Let $A$ be an ordinary abelian variety of dimension $g \geq 2$ over an algebraically closed field of characteristic $p=2$ and $X=A/\iota$ the Kummer variety of $A$. Then the completed local ring at every singular point of $X$  is isomorphic to
\begin{align*}
k\llbracket T_{M}, X_i \ | \  M \subset \lbrace 1, \ldots, g \rbrace, \ i = 1, \ldots, g \rrbracket /J,
\end{align*}
where the ideal $J$ is generated by all relations given above. The embedding dimension of this local ring equals $2^g - 1$.
\end{theorem2}
For Kummer surfaces, we get the known types of the singularities as described by Artin  \cite{artin_rdp}, Shioda  \cite{shioda} and Katsura \cite{katsura} in the 1970s.

The paper is structured as follows: First, we give normal forms for Weierstra\ss\ equations and find an open affine $\iota$-invariant subscheme $U=\textnormal{Spec}(R)$ of the product of elliptic curves. In Section~2 we get an explicit description of the quotient $U/\iota$ by taking the spectrum of the ring of invariants $R^\iota$.  Completing with respect to the singular point yields the singularity. This result is extended to arbitrary ordinary abelian varieties in Section~3. Finally, we  give a brief overview on related problems, namely quotient singularities arising from Artin--Schreier curves and on the rationality problem for Kummer varieties stemming from supersingular abelian varieties.


\medskip
\noindent
\textbf{Acknowledgement.} The author would like to thank Stefan Schr\"oer for many helpful discussions. This work is part of the author's PhD thesis and was conducted in the framework of the research training group
\emph{GRK 2240: Algebro-Geometric Methods in Algebra, Arithmetic and Topology},
which is funded by the DFG.

\section{Products of elliptic curves}
In this section $k$ denotes an algebraically closed ground field of characteristic $p=2$. The goal of this section is to find a suitable normal form for the action of the sign involution on an open subscheme of an elliptic curve.

Let $E$ be an ordinary elliptic curve with $j$-invariant $j_E \neq 0$ and pick an $\alpha \in k$ with $\alpha^6 = j_E^{-1}$. Then the Weierstra\ss\ equation
$
y^2 + \alpha x y = x^3 + \beta x
$
with coefficients $\alpha$ and $\beta=\alpha j_E^{-1}$ defines an elliptic curve with the same $j$-invariant. We may assume that $E$ is given by this equation. The spectrum of the ring
\[
R_{j_{E}}= k\left[x, y \right]/(y^2 + \alpha x y + x^3 + \beta x)
\]
yields an affine open set $U$, which contains all points of $E$ except the identity element. Hence, the sign involution $\iota$ maps $U$ to itself. By \cite{silverman}, Chapter~III, Algorithm~2.3, the action of $\iota$ is given by $(x,y)\mapsto (x, y +\alpha x)$; in particular, the point $(0,0) \in U$ is the unique point of order $2$.  The induced action on the affine coordinate ring $R_{j_E}$ is given by $$x \longmapsto x, \ \ \ \ \ y \longmapsto y + \alpha x.$$
It will turn out to be important that there is a linear action on $\mathbb{A}^2_k = \textnormal{Spec}(k[x,y])$ defined as above such that the action of $\iota$ on $\textnormal{Spec}(R_{j_E})$, considered as closed subscheme of $\mathbb{A}^2_k$, is induced by this linear action.

Now let $E$ be a supersingular elliptic curve, i.e. $j_E = 0$. The curve $E$ can be defined by the homogeneous Weierstra\ss\ equation $Y^2Z + YZ^2 = X^3$ and the sign involution $\iota$ operates on $E$ by $(X:Y:Z)  \mapsto (X:Y+Z:Z)$. The identity element $O=(0:1:0)$ is the only fixed point of $\iota$. Introducing new coordinates $x=X/Y$, $z=Z/Y$ yields the open affine subscheme
$
U'=\textnormal{Spec}\left(k\left[x,z \right]/(z+z^2+x^3)\right)
$
which contains all points of $E$ except $(0:0:1)$. An $\iota$-invariant open neighbourhood $U$ of $O$ is obtained by removing the point $\iota((0:0:1))=(0:1:1)$ from $U'$. The affine coordinate ring of $U$ is
\[
R_0=k \left[x, z, {(z+1)}^{-1} \right]/(z+z^2+x^3)
\]
and the induced automorphism of $k$-algebras $\iota^{*}\colon R_0 \rightarrow R_0$ is given by
\[
x \longmapsto \frac{x}{z+1}, \ \ \ \ \ z \longmapsto \frac{z}{z+1}.
\]
Moreover, we have $(z+1)^{-1} \mapsto z+1$.

The following lemma gives a simpler description of $R_0$ which requires only two generators.

\begin{lemma}
The $k$-algebra $R_0$ is generated by $x^2(z+1)^{-1}$ and $x(z+1)^{-1}$. More precisely: Let $S=k\left[ v, w \right]/(w^2+v^2w+v)$, then
\begin{align*}
&v \longmapsto \frac{x^2}{z+1}, \ \ \ \ \ w \longmapsto \frac{x}{z+1}.
\end{align*}
defines an isomorphism $\Phi\colon S \rightarrow R_0$.
\end{lemma}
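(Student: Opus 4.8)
The plan is to prove that $\Phi$ is an isomorphism by first checking that it is well defined and then exhibiting an explicit two-sided inverse. Throughout I would exploit the two facts that we are in characteristic $2$ and that the defining relation of $R_0$ can be rewritten as $x^3 = z^2+z = z(z+1)$.

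First I would verify well-definedness, i.e. that the image of the relation $w^2+v^2w+v$ vanishes in $R_0$. Substituting $v=x^2(z+1)^{-1}$ and $w=x(z+1)^{-1}$ and clearing the common denominator $(z+1)^3$, the numerator becomes $x^2\big((z+1)+(z+1)^2\big)+x^5$; using $(z+1)^2 = z^2+1 = x^3+z+1$ this collapses to $x^2\cdot x^3 + x^5 = 0$. Hence $\Phi$ is a well-defined $k$-algebra homomorphism between two integral domains (here I would record in passing that $w^2+v^2w+v$ is irreducible over $k[v]$, so $S$ is a domain).

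Next I would construct the inverse $\Psi\colon R_0 \to S$. Guided by the identities $v^2+w \mapsto x$ and $v^3+vw\mapsto z$, which one checks using $x^4 = xz(z+1)$ and $x^6 = z^2(z+1)^2$, I would set $x \mapsto v^2+w$ and $z \mapsto v^3+vw$. Since $R_0$ is a localization of $k[x,z]/(z^2+z+x^3)$ at $(z+1)$, the crucial point is that $\Psi$ must send $z+1$ to a \emph{unit} of $S$, and this is the step I expect to be the main obstacle: it is not transparent that $1+v^3+vw$ is invertible. The clean way around it is to use that $S = k[v]\oplus k[v]\,w$ is free of rank $2$ over $k[v]$, with $w^2 = v^2w+v$; the norm (the determinant of multiplication) of $a+bw$ equals $a^2+abv^2+b^2v$, so the norm of $1+vw$ is $1+v^3+v^3 = 1$. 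Thus $1+vw$ is a unit, and its degree-$2$ relation over $k[v]$ yields $(1+vw)^{-1} = 1+vw+v^3$, which is exactly the image $1+vw+v^3$ of $z+1$; hence $z+1$ is a unit and one is forced to $(z+1)^{-1}\mapsto 1+vw$.

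Finally I would check that $\Psi$ respects the relation $z^2+z+x^3=0$ — a direct computation reducing $(v^3+vw)^2+(v^3+vw)+(v^2+w)^3$ to $0$ via $w^2=v^2w+v$ — so that $\Psi$ is a well-defined homomorphism, and then confirm $\Phi\circ\Psi = \mathrm{id}$ and $\Psi\circ\Phi=\mathrm{id}$ on generators, e.g. $\Psi(\Phi(v)) = (v^2+w)^2(1+vw) = v$ and $\Psi(\Phi(w)) = (v^2+w)(1+vw) = w$. The identities $x=v^2+w$, $z=v^3+vw$, $(z+1)^{-1}=1+vw$ also make surjectivity of $\Phi$ manifest, since $x$, $z$ and $(z+1)^{-1}$ generate $R_0$, so the generation statement of the lemma falls out at the same time.
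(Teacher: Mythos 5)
Your proof is correct, and all the computations check out (I verified the unit identity $(1+vw)(1+vw+v^3)=1$, the relation $(v^3+vw)^2+(v^3+vw)+(v^2+w)^3=0$ in $S$, and the composites on generators). However, you take a genuinely different route from the paper on the injectivity half. The well-definedness check is the same, and your candidate preimages coincide with the paper's: the paper exhibits $\Phi(vw+1)=(z+1)^{-1}$, $\Phi(vw+v^3)=z$, $\Phi\bigl(w(vw+v^3+1)\bigr)=x$, and $w(vw+v^3+1)=v^2+w$ in $S$, which is exactly your $\Psi(x)$. But where you build an explicit two-sided inverse $\Psi$ -- which forces you to confront the localization issue (why $1+v^3+vw$ is a unit in $S$, settled via the norm form $a^2+abv^2+b^2v$ over $k[v]$) and to verify that the Weierstra\ss\ relation maps to zero -- the paper instead stops at surjectivity and handles injectivity abstractly: $w^2+v^2w+v$ is irreducible by Eisenstein at $(v)$, so $S$ is a one-dimensional domain, hence $\ker(\Phi)$ is a prime ideal that is either zero or maximal, and maximal is excluded because $R_0$ is not a field. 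The paper's argument is shorter and sidesteps your "main obstacle" entirely; your argument is more self-contained (it never needs that $S$ is a domain or any dimension theory) and has the added benefit of producing the inverse isomorphism explicitly, with $x\mapsto v^2+w$, $z\mapsto v^3+vw$, $(z+1)^{-1}\mapsto 1+vw$, which makes subsequent computations such as the transport of $\iota^{*}$ to $S$ in the following lemma purely mechanical rather than requiring the elements to be recognized by hand.
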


\begin{proof}
The homomorphism $\Phi$ is well-defined: Of course there is a homomorphism $\tilde{\Phi} \colon k \left[ v, w \right] \rightarrow R_0$ which maps the indeterminates to the given elements, so it is enough to check that the given relation lies in the kernel of $\tilde{\Phi}$:
\begin{align*}
\tilde{\Phi}(w^2+v^2w+v) &= \frac{x^2}{(z+1)^2}+ \frac{x^4}{(z+1)^2} \cdot \frac{x}{z+1}+ \frac{x^2}{z+1} \\
 &= \frac{x^2}{(z+1)^3} \cdot \left(z+x^3+ z^2\right) = 0.
\end{align*}
Furthermore, simple calculations give
\begin{align*}
\Phi\left(vw+1\right)= \frac{1}{z+1}, \ \ \ \Phi\left(vw+v^3\right)=z, \ \ \ \Phi\left(w(vw+v^3+1)\right) = x,
\end{align*}
which proves surjectivity of $\Phi$. It remains to show that $\Phi$ is injective: The polynomial $w^2+v^2w+v \in k[v][w]=k[v,w]$ is irreducible (Eisenstein's criterion for prime element $v$), thus a prime element in the two-dimensional factorial ring $k[v,w]$. Consequently, the ring $S$ is a one-dimensional domain. Since both $R_0$ and  $S/\ker(\Phi)$ are domains, the kernel $\ker(\Phi) \subset S$ is a prime ideal. Hence, either $\ker(\Phi)=0$ holds or $\ker(\Phi)$ is a maximal ideal. The last case is impossible because $R_0$ is not a field.
\end{proof}

By abuse of notation, the induced involution $\Phi^{-1} \circ \iota^{*} \circ \Phi$ on $S$ will also be denoted by $\iota^{*}$.
\begin{lemma}\label{lemnonlin}
We have $\iota^{*}(v) = v$ and $\iota^{*}(w)=w+v^2$.
\end{lemma}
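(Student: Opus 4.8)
The plan is to transport the question to $R_0$: to determine $\iota^{*}$ on a generator $g \in S$, I would compute $\iota^{*}(\Phi(g))$ inside $R_0$ using the explicit formulas $\iota^{*}(x) = x/(z+1)$, $\iota^{*}(z) = z/(z+1)$ and $\iota^{*}((z+1)^{-1}) = z+1$ recorded above, and then identify the result as $\Phi$ of some element of $S$ by using the identities for $\Phi$ collected in the preceding lemma. Since $\Phi$ is an isomorphism, reading off that element gives $\iota^{*}(g) = \Phi^{-1}(\iota^{*}(\Phi(g)))$.

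First I would treat $v$. From $\Phi(v) = x^2/(z+1)$ and the multiplicativity of $\iota^{*}$ one gets $\iota^{*}(\Phi(v)) = (x/(z+1))^2 \cdot (z+1) = x^2/(z+1) = \Phi(v)$. Hence $\iota^{*}$ fixes $\Phi(v)$, and applying $\Phi^{-1}$ yields $\iota^{*}(v) = v$ immediately.

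For $w$ the first part is just as quick: from $\Phi(w) = x \cdot (z+1)^{-1}$ we obtain $\iota^{*}(\Phi(w)) = (x/(z+1)) \cdot (z+1) = x$. The useful observation is that the preceding lemma already exhibits $x$ as a value of $\Phi$, namely $\Phi\bigl(w(vw + v^3 + 1)\bigr) = x$. Therefore $\iota^{*}(w) = \Phi^{-1}(x) = w(vw + v^3 + 1) = vw^2 + v^3 w + w$ in $S$.

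The last and only mildly delicate step is to simplify $vw^2 + v^3 w + w$ modulo the defining relation of $S$. In characteristic $2$ the relation $w^2 + v^2 w + v = 0$ reads $w^2 = v^2 w + v$, so $vw^2 = v^3 w + v^2$; substituting and cancelling the repeated term $v^3 w$ leaves $vw^2 + v^3 w + w = v^2 + w$, whence $\iota^{*}(w) = w + v^2$. I do not expect a genuine obstacle here — the computations are direct once the right preimages from the previous lemma are invoked — the only point requiring care is the correct reduction in $S$ together with the characteristic-$2$ cancellations.
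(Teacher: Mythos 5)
Your proof is correct and is essentially the same computation as the paper's: both arguments transport the question through the isomorphism $\Phi$, and both reduce to the defining relation of the curve (you use $w^2 = v^2w + v$ in $S$, the paper uses $x^3 = z + z^2$ in $R_0$, which correspond under $\Phi$). The only difference is organizational: the paper guesses $w+v^2$ and verifies $\Phi(w+v^2)=\iota^{*}(\Phi(w))$ by simplifying inside $R_0$, whereas you compute $\iota^{*}(\Phi(w))=x$ first and then pull back via the explicit preimage $\Phi\bigl(w(vw+v^3+1)\bigr)=x$ from the surjectivity argument, which derives the answer rather than confirming it.
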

\begin{proof}
Straightforward computation: The element $x \cdot x(z+1)^{-1} \in R_0$ is  invariant as $\iota^{*}$ interchanges the factors. Next, we have
\begin{align*}
w + v^2 &= \Phi^{-1} \left( \frac{x}{z+1} + \frac{x^4}{z^2+1} \right) = \Phi^{-1} \left( \frac{x(z+1+x^3)}{z^2+1} \right) \notag \\ &= \Phi^{-1} \left( \frac{x(z^2+1)}{z^2+1} \right) = \Phi^{-1} \left( \iota^{*}\left( \frac{x}{z+1} \right) \right) = \Phi^{-1} \circ \iota^{*} \circ \Phi(w)=\iota^{*}(w).
\end{align*}
\end{proof}
The group action from Lemma~\ref{lemnonlin} is not induced by a linear action on $\mathbb{A}^2_k$, but it can still be regarded as closely related.


The $k$-algebras $R_{j}$ form the building blocks to build an open affine neighbourhood of one 2-division point. We can replace $R_0$ by the $k$-algebra $S$ and for $j_E\neq 0$ we can replace $R_{j_E}$ by the isomorphic ring $k\left[x, y \right]/(y^2 + x y + \alpha^{-3}x^3 + \alpha^{-1}\beta x)$.

The following proposition gives a summary of the situation established now:

\begin{prop}\label{situation}
Let $k$ be an algebraically closed field of characteristic $p=2$ and $A=E_1 \times \ldots \times E_g$ be a product of elliptic curves over $k$, where $E_1, \ldots, E_r$ are ordinary and $E_{r+1}, \ldots, E_g$ are supersingular. Then the spectrum of
\begin{align*}
R = k\left[ x_1, y_1, \ldots, x_r, y_r, \, v_{r+1}, w_{r+1}, \ldots, v_g, w_g \right] / I,
\end{align*}
with the ideal $I$ generated by
\begin{align*}
 &y_i^2 + x_i y_i + j_{E_i}^{\frac{1}{2}} x_i^3 + j_{E_i}^{-1} x_i,\ \  1 \leq i \leq r, \\
 &w_j^2+v_j^2w_j+v_j,\ \  r+1 \leq j \leq g,
\end{align*}
defines an open affine $\iota$-invariant neighbourhood of exactly one $2$-division point of $A$. The sign involution $\iota\colon A \rightarrow A$ induces a  $\mathbb{Z}/2\mathbb{Z}$-action on $R$ by
\begin{align*}
x_i &\longmapsto x_i, \ \ \ y_i \longmapsto y_i + x_i, \ \ \ \ \ 1 \leq i \leq r,  \\
v_j &\longmapsto v_j, \ \ \ w_j \longmapsto w_j + v_j^2, \ \ \ r+1 \leq j \leq g.
\end{align*}
\end{prop}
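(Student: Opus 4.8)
The plan is to realise $U$ as a product of the open affine subschemes already constructed on each factor and to identify its coordinate ring with the tensor product of theirs. First I would record that the sign involution on a product of abelian varieties is the product of the sign involutions, $\iota = \iota_1 \times \cdots \times \iota_g$, since $-(P_1, \ldots, P_g) = (-P_1, \ldots, -P_g)$. For each ordinary factor $E_i$ ($1 \le i \le r$) let $U_i \subset E_i$ be the open affine with coordinate ring $k[x_i, y_i]/(y_i^2 + x_i y_i + j_{E_i}^{1/2} x_i^3 + j_{E_i}^{-1} x_i)$ obtained from $R_{j_{E_i}}$ by the rescaling recorded just before the proposition, and for each supersingular factor $E_j$ ($r+1 \le j \le g$) let $U_j \subset E_j$ be the open affine with coordinate ring $S = k[v_j, w_j]/(w_j^2 + v_j^2 w_j + v_j)$ provided by the isomorphism $\Phi\colon S \to R_0$. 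Each $U_i$ is $\iota_i$-invariant by construction, hence $U := U_1 \times \cdots \times U_g$ is an open affine subscheme of $A$ invariant under $\iota$.

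Next I would compute the coordinate ring. Because the product is taken over $\textnormal{Spec}(k)$ and each $U_i$ is affine, one has $\Gamma(U, \mathscr{O}_A) = \bigotimes_{i=1}^g \Gamma(U_i, \mathscr{O}_{E_i})$, and the tensor product over $k$ of the quotient rings above is exactly the ring $R = k[\ldots]/I$ of the statement: the sets of variables are disjoint and $I$ is the sum of the extended ideals. Since $\iota^* = \bigotimes_i \iota_i^*$ acts factorwise, the induced action on the generators is read off factor by factor. The rescaling turns the action $(x,y) \mapsto (x, y + \alpha x)$ of an ordinary factor into $x_i \mapsto x_i$, $y_i \mapsto y_i + x_i$, while Lemma~\ref{lemnonlin} gives $v_j \mapsto v_j$, $w_j \mapsto w_j + v_j^2$ on a supersingular factor. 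This yields both the presentation and the action claimed in the proposition.

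Finally I would check that $U$ meets the $2$-division points of $A$ in exactly one point. The $2$-division points of a product are the products of the $2$-division points of the factors, so it suffices to count on each $E_i$. An ordinary elliptic curve in characteristic $2$ has $E_i[2](k) \cong \mathbb{Z}/2\mathbb{Z}$, consisting of the origin and the order-two point $(0,0)$; as $U_i$ omits the origin, $U_i$ contains precisely the point $(0,0)$. A supersingular elliptic curve has trivial $E_j[2](k)$, and $U_j$ contains the origin $O_j$. Hence $U$ contains the single $2$-division point whose ordinary coordinates are the order-two points and whose supersingular coordinates are the origins, and no other; being open it is a neighbourhood of this point, as asserted.

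Since all the substantial input has already been prepared in the preceding lemmas, I expect no serious obstacle. The only points demanding care are the factorwise identification of $\Gamma(U, \mathscr{O}_A)$ with the tensor product together with its presentation by generators and relations, and the bookkeeping of the rescaling that brings the ordinary factors into the stated normal form and action.
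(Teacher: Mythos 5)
Your proposal is correct and takes essentially the same route as the paper: the paper states this proposition without a separate proof, as a summary of the Section~1 constructions, and your write-up simply makes explicit the implicit assembly step (product of the invariant affines on each factor, identification of the coordinate ring with the tensor product, componentwise action of the sign involution, and the count of one $2$-division point per factor). All the details you fill in (the rescaled ordinary normal form, Lemma~\ref{lemnonlin} for the supersingular factors, and the torsion count) are exactly the ingredients the paper prepared for this purpose.
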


\section{Wild group actions}

Let $R$ be as in Proposition~\ref{situation} and $\hat{R}$ be the completion of $R$ at the origin. In the rings $k \llbracket x_i, y_i \rrbracket$ and $k\llbracket v_j, w_j \rrbracket$, the formal partial derivatives
\begin{align*}
 &\frac{\partial}{\partial x_i}(y_i^2 + x_i y_i + j_{E_i}^{\frac{1}{2}} x_i^3 + j_{E_i}^{-1} x_i)=y_i+ j_{E_i}^{\frac{1}{2}}  x_i^2+j_{E_i}^{-1}, \\
 &\frac{\partial}{\partial v_j}(w_j^2 + v_j^2 w_j + v_j)=  1
\end{align*}
are units, so there exist unique formal power series $x_i(y_i)$ and $v_j(w_j)$  satisfying the  respective relations in $\hat{R}$  (cf. \cite{bourbaki}, Chapter IV, page 37) and it follows that $\hat{R} \cong k \llbracket y_1, \ldots, y_r, w_{r+1}, \ldots, w_g  \rrbracket$. The induced group action of the sign involution on $\hat{R}$ is known to be non-linear, see e.g. \cite{peskin}, Proposition~2.1:

\begin{prop} 
Let $k$ be an algebraically closed field of characteristic $p>0$ and $S = k\llbracket u_1, \ldots , u_g\rrbracket$ equipped with a $G$-action, where the order of $G$ is divisible by $p$. If the morphism
$
\textnormal{Spec}(S) \rightarrow \textnormal{Spec}(S^G)$ is only at the maximal ideal $\mathfrak{m} \subset S$ ramified, then it is not possible to choose coordinates for $S$ such that $G$ acts linearly on $S$.
\end{prop}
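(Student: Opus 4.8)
The plan is to prove the contrapositive: if the $G$-action \emph{can} be linearised, then the quotient map ramifies at some prime strictly smaller than $\mathfrak{m}$. Assume therefore that coordinates $u_1,\dots,u_g$ have been chosen so that each $\sigma\in G$ acts on $S$ through a matrix $M_\sigma\in\mathrm{GL}_g(k)$, and assume (as we may, and as holds in the situations of interest) that the action is faithful, so that $\operatorname{Frac}(S)/\operatorname{Frac}(S^G)$ is Galois with group $G$. Since $p\mid |G|$, Cauchy's theorem provides an element $\sigma\in G$ of order exactly $p$; by faithfulness its matrix $M:=M_\sigma$ is not the identity.

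The first step is linear algebra and is where the characteristic enters. From $M^p=I$ and $\operatorname{char}k=p$ we get $(M-I)^p=M^p-I=0$, so $N:=M-I$ is nilpotent and $M$ is unipotent. As $\sigma\neq\mathrm{id}$ we have $N\neq 0$, hence $1\le \dim_k\ker N\le g-1$. I would then set $\mathfrak{p}:=\bigl(\sigma(u_1)-u_1,\dots,\sigma(u_g)-u_g\bigr)\subset S$, the ideal of the fixed locus of $\sigma$. Its generators are linear forms spanning a space of dimension $\operatorname{rank}N=g-\dim\ker N\le g-1$, so $\mathfrak{p}$ is generated by at most $g-1$ independent linear forms; being generated by part of a regular system of parameters, $\mathfrak{p}$ is prime with $S/\mathfrak{p}$ a power series ring, and $\operatorname{ht}\mathfrak{p}\le g-1$, whence $\mathfrak{p}\neq\mathfrak{m}$.

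The second step converts this positive-dimensional fixed locus into ramification. By construction $\sigma(u_i)-u_i\in\mathfrak{p}$ for every $i$, so $\sigma$ fixes the generators of $S/\mathfrak{p}$ and therefore acts as the identity on all of $S/\mathfrak{p}$; in particular $\sigma$ stabilises $\mathfrak{p}$ and induces the identity on the residue field $\kappa(\mathfrak{p})$. Thus $\sigma$ lies in the inertia group $I(\mathfrak{p})$ of the Galois extension $S/S^G$ (recall $S$ is a complete regular local ring, hence a normal domain). Since $I(\mathfrak{p})\neq 1$, the standard ramification theory of Galois extensions of normal domains (e.g.\ Bourbaki, \emph{Commutative Algebra}, Ch.~V) shows that $\operatorname{Spec}(S)\to\operatorname{Spec}(S^G)$ is ramified at $\mathfrak{p}$. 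As $\mathfrak{p}\neq\mathfrak{m}$, this contradicts the hypothesis that the map is unramified away from $\mathfrak{m}$, and the proposition follows.

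The routine parts are the unipotence of order-$p$ elements and the primality of an ideal generated by linear forms. I expect the main obstacle to be the second step: pinning down precisely that a nontrivial inertia group at the generic point of the fixed locus forces genuine non-étaleness even in the wild (order divisible by $p$) regime, and checking that the identification of $I(\mathfrak{p})$ is correct. The only other point requiring care is the degenerate case of a non-faithful action, where one must either invoke faithfulness directly or replace $G$ by the image of its action and verify that the order remains divisible by $p$.
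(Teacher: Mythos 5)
Your proposal is correct, but there is nothing in the paper to compare it against: the paper does not prove this proposition at all, it quotes it from Peskin (\cite{peskin}, Proposition~2.1). Your argument --- an order-$p$ element acts by a unipotent matrix, hence has a positive-dimensional linear fixed locus, whose generic point is a prime $\mathfrak{p}\neq\mathfrak{m}$ with nontrivial inertia and therefore a point of ramification --- is the standard proof and almost certainly the substance of Peskin's.

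Two points you flagged as delicate are indeed the essential ones, and both resolve in your favour. First, faithfulness is not a removable convenience: as literally stated the proposition fails for non-faithful actions (in odd characteristic $p$, let $\mathbb{Z}/2\mathbb{Z}\times\mathbb{Z}/p\mathbb{Z}$ act on $k\llbracket u\rrbracket$ through its first factor by $u\mapsto -u$; the action is linear, $p$ divides the group order, and the quotient map ramifies only at $\mathfrak{m}$). So one must read the hypothesis as $G\subseteq\mathrm{Aut}(S)$, exactly as in Peskin; note that your fallback of ``replace $G$ by its image'' does not work in general, since divisibility by $p$ can be lost, as this example shows. Second, your worry about the wild regime is genuine and is precisely why ``unramified'' must be taken in the full sense (the prime below generates the maximal ideal \emph{and} the residue extension is separable): for $\sigma(u)=u$, $\sigma(v)=v+u$ on $S=k\llbracket u,v\rrbracket$ in characteristic $2$ one has $S^{\sigma}=k\llbracket u,\,v^2+uv\rrbracket$, and along the fixed divisor $\mathfrak{p}=(u)$ the ideal $\mathfrak{q}S_{\mathfrak{p}}$ actually equals $\mathfrak{p}S_{\mathfrak{p}}$; the ramification there is detected only by the inseparability of $\kappa(\mathfrak{p})=k((v))$ over $\kappa(\mathfrak{q})=k((v^2))$. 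With that definition, the criterion ``nontrivial inertia at $\mathfrak{p}$ implies ramification at $\mathfrak{p}$'' does hold in the wild case: unramifiedness at $\mathfrak{p}$ over the normal ring $S^G$ forces \'etaleness there, and a finite \'etale extension of complete local rings has automorphism group embedding into that of the residue extension, so an inertia element would act trivially on $S_{\mathfrak{p}}$ and hence on $S$, contradicting faithfulness. This is the content of the standard references (SGA~1, Exp.~V, or Bourbaki as you cite), so your appeal to them is legitimate and the proof goes through.
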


However, the following statement shows that for specific diagonal actions one can obtain the invariants from a group action on affine space:

\begin{prop}\label{sr}
Let $k$ be a field of characteristic $p > 0$ and $G =\langle \sigma \rangle \cong \mathbb{Z}/p \mathbb{Z}$. Define a $G$-action on the polynomial ring $S= k \left[A_1, B_1, \ldots, A_g, B_g  \right]$ by
\begin{align}
 \sigma \left( A_i \right) &= A_i,  \quad \quad  \sigma \left( B_i \right) = B_i + A_i^{e_i} \notag 
\end{align}
for integers $e_i \geq 1$. Further, assume there are polynomials $f_i \in S$ of the form
\begin{equation} 
f_i = B_i^p - A_i^{(p-1)e_i} B_i + P_i(A_i) \notag 
\end{equation}
for some $P_i \in  A_i \, k[A_i]$, and denote by $I=(f_1, \ldots, f_g) \subset S$ the ideal generated by the $f_i$. Then the $G$-action on $S$ induces an action on the quotient $S/I$ and the residue class map  defines a surjective homomorphism $\phi\colon S^G \rightarrow \left( S/I \right)^G$ of $k$-algebras. In particular, \begin{equation}
S^{G}/I \cap S^{G} \cong \left( S/I \right)^G. \notag
\end{equation}
\end{prop}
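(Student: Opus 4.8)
The statement has two parts: that the action descends to $S/I$ and that the induced map $\phi$ is surjective; the displayed isomorphism is then formal. I would first dispose of the easy part. The decisive computation is that \emph{each generator $f_i$ is $G$-invariant}: using $(B_i+A_i^{e_i})^p = B_i^p + A_i^{pe_i}$ in characteristic $p$, the two terms $A_i^{pe_i}$ cancel and one gets $\sigma(f_i)=f_i$. Hence $\sigma(I)=I$, the residue map $S\to S/I$ is $G$-equivariant and restricts to $\phi\colon S^G\to (S/I)^G$, and since $\ker\phi = S^G\cap I$, the asserted isomorphism $S^G/(I\cap S^G)\cong (S/I)^G$ is precisely the claim that $\phi$ is surjective. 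So the whole content lies in surjectivity. Writing $\tau=\sigma-1$, an element of $(S/I)^G$ is the class of some $t\in S$ with $\tau t\in I$, and lifting it to an invariant means finding $z\in I$ with $\tau z=\tau t$. Thus the plan is to prove the identity $I\cap \tau S = \tau I$ (the inclusion $\supseteq$ being clear), which is equivalent to the surjectivity of $\phi$.

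To attack this I would introduce the norm-type invariants $\mathrm{Nm}_i=\prod_{c\in\mathbb F_p}(B_i+cA_i^{e_i}) = B_i^p - A_i^{(p-1)e_i}B_i$, so that $f_i=\mathrm{Nm}_i+P_i(A_i)$, and set $R_0=k[A_1,\dots,A_g]$ and $C=R_0[\mathrm{Nm}_1,\dots,\mathrm{Nm}_g]$. Then $C$ is a polynomial subring of $S^G$ over which $S$ is free with basis $\{B^a : 0\le a_i\le p-1\}$ (from $B_i^p=\mathrm{Nm}_i+A_i^{(p-1)e_i}B_i$), and $G$ acts $C$-linearly. Crucially $I=JS$ with $J=(f_1,\dots,f_g)C$, an ideal on which $G$ acts trivially and which is generated by a regular sequence, since $C/J\cong R_0$ has codimension $g$ in the Cohen--Macaulay ring $C$. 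Tensoring $0\to \tau S\to S\to S_G\to 0$ (where $S_G=S/\tau S$ are the coinvariants) with $C/J$ over $C$ and using that $S$ is $C$-flat, a short diagram chase identifies $(I\cap \tau S)/\tau I$ with $\mathrm{Tor}^C_1(S_G,\,C/J)$. So it suffices to show this Tor group vanishes, i.e. that $f_1,\dots,f_g$ is a regular sequence on the coinvariant module $S_G$.

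The crux, and the one point that really needs an idea, is the observation that \emph{in the $C$-basis $\{B^a\}$ the matrix of $\tau$ has all its entries in $R_0$}: expanding $\tau(B^a)=\prod_i(B_i+A_i^{e_i})^{a_i}-B^a$ into the basis produces coefficients that are polynomials in the $A_i$ alone. Writing $C=R_0[f_1,\dots,f_g]$ (a genuine polynomial ring, as $f_i=\mathrm{Nm}_i+P_i(A_i)$), flat base change along $R_0\hookrightarrow C$ then gives
\[
S_G \;=\; \bigl(\operatorname{coker}_{R_0}\tau\bigr)\otimes_{R_0}C \;=\; \bigl(\operatorname{coker}_{R_0}\tau\bigr)[f_1,\dots,f_g],
\]
so $S_G$ is \emph{extended from $R_0$}: a module of polynomials in $f_1,\dots,f_g$ with coefficients in the $R_0$-module $\operatorname{coker}_{R_0}\tau$. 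On such a module the variables $f_1,\dots,f_g$ obviously form a regular sequence, whence $\mathrm{Tor}^C_1(S_G,C/J)=0$ and $\phi$ is surjective.

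I expect the main obstacle to be precisely \emph{finding} this structure rather than any later calculation. One cannot proceed by cohomological triviality: $S$ is not a free $k[G]$-module, and in fact $H^1(G,S)\neq 0$ (for instance the class of $1$ is nonzero, since every element of $\tau S$ has vanishing constant term). The substance of the argument is therefore to recognise that passing to the norm subring $C$ linearises the $G$-action, that $I$ is extended from the $G$-trivial complete intersection $J$, and above all that the coinvariants $S_G$ are extended from $R_0$ — which is what forces the regular sequence $f_1,\dots,f_g$ to remain regular on $S_G$. Once these three facts are in place, the surjectivity of $\phi$ is immediate.
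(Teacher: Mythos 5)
Your proof is correct, but it takes a genuinely different route from the paper's. The paper argues by elementary linear algebra: for each class in $S/I$ it takes the representative $h_0$ with $\deg_{B_i}(h_0)\le p-1$, observes that $\sigma$ preserves this degree bound, and proves the key lemma that the only element of $I$ with all $B_i$-degrees at most $p-1$ is $0$ --- by exhibiting an explicit $k$-basis of $S$ (products of monomials $A_i^rB_i^s$ with $s\le p-1$ and of multiples of the $f_i$) in which $I$ is spanned by precisely the basis vectors having a factor of $B$-degree $\ge p$. Hence $\sigma(h_0)-h_0\in I$ forces $\sigma(h_0)=h_0$, so the bounded-degree representative of an invariant class is itself invariant. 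Your argument repackages the same underlying structure homologically: your free $C$-basis $\{B^a:0\le a_i\le p-1\}$ is the paper's basis organized over the norm subring rather than over $k$. You reformulate surjectivity as $I\cap\tau S=\tau I$, identify $(I\cap\tau S)/\tau I$ with $\mathrm{Tor}_1^C(S_G,C/J)$, and kill that Tor group via the observation that the matrix of $\tau$ has entries in $R_0$, so that $S_G$ is extended from $R_0$ and $f_1,\dots,f_g$ remains a (weakly) regular sequence on it; Koszul homology then gives the vanishing. Two small points deserve to be made explicit in a final write-up: the identity $\tau I=J\cdot\tau S$ (immediate from $C$-linearity of $\tau$ and $I=JS$), which is what makes the diagram chase produce $(I\cap\tau S)/\tau I$ rather than $(I\cap\tau S)/J\tau S$ as the Tor group; and the passage from weak regularity on $S_G$ to $\mathrm{Tor}_1=0$ via the Koszul resolution of $C/J$. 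As for what each approach buys: the paper's method directly yields the by-product stated after the proposition and used again in the proof of Proposition~\ref{endlerzkalg}, namely that every invariant class has a \emph{unique} invariant representative of bounded $B_i$-degrees, which your proof recovers only after extra unwinding (apply the reduction $C\to C/J\cong R_0$ to the $C$-coordinates of an invariant lift). Your method, on the other hand, isolates the conceptual reason the statement holds --- the action linearizes over a subring $C\subseteq S^G$, the ideal is extended from a $G$-trivial complete intersection $J\subseteq C$, and the coinvariants are extended from $C/J$ --- and would apply verbatim in more general situations of this shape, where an explicit monomial manipulation might be unavailable.
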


\begin{proof}
As $\sigma(f_i )= f_i$ holds, it follows $\sigma(I) = I$ and one obtains the well-defined $G$-action $\sigma(h+I) =\sigma(h) + I$ on $S/I$.

Let $h+I \in S/I$ be an arbitrary element. There exists a representative $h_0 \in S$ with the property that the indeterminates $B_1, \ldots, B_g$ occur in every monomial of $h_0$  with exponent at most $p-1$, i.e.
$
\text{deg}_{B_i}(h_0) \leq p-1
$
for all $1 \leq i \leq g$. The polynomial
$$
\sigma(h_0)=h_0(A_1, B_1 + A_1^{e_1}, \ldots, A_g, B_g + A_g^{e_g})
$$
has the property $\text{deg}_{B_i}(\sigma(h_0)) \leq p-1$ as well; the same is true for $\sigma(h_0) - h_0 \in S$.

The element $h_0 + I \in S/I$ is invariant under $G$ iff $\sigma(h_0) - h_0 \in I$. In the case $\sigma(h_0) - h_0 =0$ one gets that $h+I = \phi(h_0)$ lies in the image of $\phi$. Hence, for surjectivity of $\phi$ it is enough to show that an element $h \in I$ which satisfies the condition $\text{deg}_{B_i}(h) \leq p-1$ for all $i$ has to be the zero polynomial.

Define the sets $M_i$ for $1 \leq i \leq g$ by
$$
M_i = \lbrace A_i^r, \, A_i^r B_i, \,\ldots, \,A_i^r B_i^{p-1},\, \left(B_i^p+ A_i^{(p-1)e_i} B_i + P_i \right)A_i^r B_i^s \ | \ r, s, \geq 0 \rbrace,
$$
so the elements of $M_i$ build up a basis of the $k$-vector space $k[A_i, B_i]$. Therefore, the elements of the set
$$
M= M_1 \cdot \ldots \cdot M_g = \lbrace m_1 \cdot \ldots \cdot m_g \ | \ m_i \in M_i, \ 1\leq i \leq g\rbrace
$$
are a basis of $S= k[A_1,B_1]\otimes \ldots \otimes k[A_g, B_g]$. The subspace $I$ has a generating set consisting of all elements of the form
$$
\left( B_i^{p}+ A_i^{(p-1)e_i} B_i + P_i \right) A_1^{r_1} B_1^{s_1} \cdot \ldots \cdot A_g^{r_g} B_g^{s_g}
$$
with $1 \leq i \leq g$ and exponents $r_1, s_1, \ldots, r_g, s_g \geq0$. As one can replace the factors $A_j^{r_j}B_j^{s_j}$ by a linear combination of elements of $M_j$, one gets a new generating set of $I$, consisting of all elements of the form
\begin{equation}
\left( B_i^p+ A_i^{(p-1)e_i} B_i + P_i \right)A_i^{r_i} B_i^{s_i} \cdot m_1^{(i)} \cdot \ldots \cdot \hat{m}_i \cdot \ldots \cdot m_g^{(i)} \notag 
\end{equation}
where $1\leq i \leq g$, $r_i, s_i \geq 0$, $m_j^{(i)}\in M_j$ for $1 \leq j \leq g$ and $\hat{m}_i$ is omitted in the product. This generating set contains exactly the elements of $M$ that have a factor $m_d$ with $\text{deg}_{B_d}(m_d) \geq p$.

Let now be $h \in I$ with $\text{deg}_{B_i}(h) \leq p-1$ for all $i$. Then on the one hand $h$ is a linear combination of elements of the generating set of $I$, on the other hand $h$ is linear combination of elements of $M$, namely of basis elements of the form $m_1 \cdot \ldots \cdot m_g$ with
$$
m_i \in \lbrace A_i^r,\ A_i^r B_i, \ldots, A_i^r B_i^{p-1} \ | \ r \geq 0 \rbrace.
$$
From the uniqueness of the linear combination it follows $h=0$. 
Hence, every element of $(S/I)^G$ is residue class of an invariant element of $S^G$.
\end{proof}
In particular, the proof shows that every (invariant) element $h+I \in S/I$ has a \emph{unique} (invariant) representative $h_0 \in S$ with $\textnormal{deg}_{B_i}(h_0) \leq p-1$ for every $1 \leq i \leq g$.

It is known that the isomorphism from the proposition always exists for finite group actions if the order of the group is coprime to the characteristic of the ground field. For wild group actions taking invariants and taking quotients may or may not commute. The easiest counterexample is the following:

\begin{example}
Let $S=\mathbb{F}_2[X,Y]$ with action of $G= \mathcal{S}_2$ by permuting the indeterminates and let $I$ be the $G$-invariant principal ideal $I=(X+Y)\subset S$. Then every element $f + I \in S/I$ is invariant. But the fundamental theorem of symmetric polynomials yields $S^G = \mathbb{F}_2[X+Y, \, XY]$, so the class  $X+I \in (S/I)^G$ does not lie in the image of the residue class map  $S^G \rightarrow (S/I)^G$. Otherwise there would exist polynomials $g(X,Y), h(X  Y) \in S$ such that $X = h(X  Y) + (X+Y) g(X,Y)$ in $S$, but coefficient comparison of the linear terms shows that this is impossible.
\end{example}

\section{Computation of invariants}
We start with the computation of the ring of invariants $R^\iota$ with $R$ and the group action as in Proposition~\ref{situation}. If all elliptic curves are ordinary, we can consider the action of the sign involution on $R$ as induced by a linear action on affine space $\mathbb{A}^{2g}_k$.  As soon as the ring of invariants in this situation is known, the ring $R^\iota$ is then obtained by using Proposition~\ref{sr}.

The involution on $k[x_1, y_1, \ldots, x_g, y_g]$ given by $x \mapsto x$, $y \mapsto y+x$ has been studied by Richman \cite{richman}, who computed a set of generators for the ring of invariants over a field of characteristic $p=2$. Here we give the more general result by Campbell and Hughes (cf. \cite{campbell}, page 4), which applies for the case $p>0$.

\begin{prop}[Campbell, Hughes] \label{campbell_erz}
Let $K= \mathbb{F}_p$ and $G=\left<{\sigma} \right>$ cyclic of order $p$ and $S_K=K\left[x_1, y_1, \ldots, x_g, y_g \right]$  with  $G$-action given by $\sigma(x_i)=x_i$, $\sigma(y_i)=y_i + x_i$ for $1\leq i\leq g$. Then the $G$-invariant elements
\begin{itemize}\itemsep2pt
 \item[$ $] $x_i \ \textnormal{ for } 1 \leq i \leq g, $
 \item[$ $] $x_i y_j - x_j y_i \  \textnormal{ for } 1\leq i < j \leq g,$
 \item[$ $] $\textnormal{N}(y_i) \ \textnormal{ for } 1 \leq i \leq g,$
 \item[$ $] $\textnormal{Tr}\left( y_1^{a_1}\cdots y_g^{a_g} \right) \textnormal{ for } 0 \leq a_1, \ldots, a_g \leq p-1 \textnormal{ and } \sum_i a_i > 2(p-1)$
\end{itemize}
build up a generating system of the invariant ring $S_{K}^{G}$. Here $\textnormal{N}(X)= \prod_{\sigma \in G} \sigma (X)$ denotes the norm and $\textnormal{Tr}(X)=\sum_{\sigma \in G} \sigma (X)$ the trace of an element $X \in S_{K}$.
\end{prop}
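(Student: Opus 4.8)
The inclusion ``$\supseteq$'' I would dispose of immediately: $x_i$ is fixed, the bracket $u_{ij}:=x_iy_j-x_jy_i$ satisfies $\sigma(u_{ij})=x_i(y_j+x_j)-x_j(y_i+x_i)=u_{ij}$, the norm $\textnormal{N}(y_i)=\prod_{c\in\mathbb{F}_p}(y_i+cx_i)$ is a product over a $\sigma$-orbit, and every trace is visibly invariant. The content is the reverse inclusion. Throughout I would lean on two facts: the transfer $\textnormal{Tr}\colon S_K\to S_K^G$ is $S_K^G$-linear, so invariant factors pull out of a trace; and the identity $\textnormal{N}(y_i)=y_i^p-x_i^{\,p-1}y_i$, i.e. $y_i^p=x_i^{\,p-1}y_i+\textnormal{N}(y_i)$, which reduces the $y_i$-degree of any monomial below $p$ at the cost of factors $x_i$ and $\textnormal{N}(y_i)$. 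Write $C\subseteq S_K^G$ for the subalgebra generated by the $x_i$, the $u_{ij}=x_iy_j-x_jy_i$ and the $\textnormal{N}(y_i)$; the goal becomes that $S_K^G$ is generated by $C$ together with the stated traces.

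The first main step is a \emph{localization} pinning down the generic structure of $S_K^G$. Inverting $x_1$ and setting $z_j:=u_{1j}/x_1=y_j-(x_j/x_1)y_1$ for $2\le j\le g$, one checks $\sigma(z_j)=z_j$, so $S_K[1/x_1]=K[x_1^{\pm1},x_2,\ldots,x_g,z_2,\ldots,z_g,y_1]$ with $G$ acting only through $y_1\mapsto y_1+x_1$. For this single translation action $K[x_1^{\pm1},y_1]^G=K[x_1^{\pm1},\textnormal{N}(y_1)]$, and since $u_{ij}=x_iz_j-x_jz_i$ and $\textnormal{N}(y_j)=(z_j^p-x_j^{\,p-1}z_j)+(x_j/x_1)^p\textnormal{N}(y_1)$ already lie in the localized ring, I would conclude $S_K^G[1/x_1]=C[1/x_1]$, and symmetrically for each $x_i$. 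Hence every $f\in S_K^G$ satisfies $x_i^Nf\in C$ for some $N$: an invariant lies in $C$ up to clearing denominators that are powers of the $x_i$, so the discrepancy between $S_K^G$ and $C$ is supported in low $x$-degree.

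The second step accounts for this discrepancy through the transfer. After subtracting a suitable element of $C$ (produced by the descent in $x$-degree coming from the localization), I would show the remainder is a transfer. Then, using the norm relation to force $y_i$-exponents below $p$ and pulling the invariant factors $x_i,u_{ij},\textnormal{N}(y_i)$ out of $\textnormal{Tr}$ by its $S_K^G$-linearity, every transfer reduces to a $C$-combination of the basic traces $\textnormal{Tr}(y_1^{a_1}\cdots y_g^{a_g})$ with $0\le a_i\le p-1$. It then remains only to decide which basic traces are genuinely needed, and this is where the threshold $\sum_i a_i>2(p-1)$ enters.

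The last step, which I expect to be the main obstacle, is to prove that the basic traces with $\sum_i a_i\le 2(p-1)$ already lie in $C$. Here I would expand
\[
\textnormal{Tr}\Big(\prod_i y_i^{a_i}\Big)=\sum_{c\in\mathbb{F}_p}\prod_i(y_i+cx_i)^{a_i}=-\sum_{\substack{0\le b_i\le a_i\\ (p-1)\mid\sum_i b_i,\ \sum_i b_i\ge1}}\ \prod_i\binom{a_i}{b_i}\,x_i^{b_i}\,y_i^{a_i-b_i},
\]
using that $\sum_{c\in\mathbb{F}_p}c^{d}=-1$ when $(p-1)\mid d$ and $d\ge1$, and $0$ otherwise. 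When $\sum_i a_i\le 2(p-1)$ the only surviving blocks have $\sum_i b_i\in\{p-1,2(p-1)\}$, and the task is to show that the resulting sums assemble into polynomials in the $u_{ij}$ and the $\textnormal{N}(y_i)$. This is the genuinely combinatorial heart: the bound $2(p-1)$ reflects that each determinant $u_{ij}$ contributes $y$-degree one while the first nonvanishing power sum forces blocks of $x$-degree $p-1$, so an invariant not already built from the $u_{ij}$ and $\textnormal{N}(y_i)$ can appear only once the total $y$-degree exceeds two such blocks. Carrying out this factorization cleanly, rather than by brute multidegree bookkeeping, is the step I expect to require the most care.
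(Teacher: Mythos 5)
First, a point of orientation: the paper does not prove this proposition at all --- it is quoted from Campbell--Hughes \cite{campbell} (it is the resolution of Richman's conjecture; Richman \cite{richman} had settled the case $p=2$, which is all the paper actually uses, and for its specific ring $R$ the paper later gives an independent eigenspace computation). So your attempt must stand on its own, and parts of it do: the forward inclusion is fine; the localization step is correct ($z_j=u_{1j}/x_1$ is invariant, $u_{ij}=x_iz_j-x_jz_i$, your identity $\textnormal{N}(y_j)=z_j^p-x_j^{p-1}z_j+(x_j/x_1)^p\,\textnormal{N}(y_1)$ checks out, and indeed $S_K^G[1/x_1]=C[1/x_1]$); and the reduction of an arbitrary transfer to a $C$-combination of basic traces with all $a_i\le p-1$, via $y_i^p=x_i^{p-1}y_i+\textnormal{N}(y_i)$ and $S_K^G$-linearity of $\textnormal{Tr}$, is also sound.

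The genuine gap is your second step, the single sentence ``I would show the remainder is a transfer.'' That sentence is not a step of the proof; it \emph{is} the theorem. Since $\textnormal{Im}(\textnormal{Tr})$ is an ideal of $S_K^G$ (and products of traces are transfers, by the same linearity you invoke), your other reductions show that the subalgebra generated by the proposed elements is exactly $C+\textnormal{Im}(\textnormal{Tr})$; hence the equality $S_K^G=C+\textnormal{Im}(\textnormal{Tr})$ is equivalent to Richman's conjecture, and nothing you have done produces it. The localization only gives $x_i^N f\in C$ for each $i$; to descend in $N$ you must divide elements of $C$ by $x_i$ \emph{inside} $C$, and the obstruction, measured by $(C\cap x_iS_K)/x_iC$, is precisely where the difficulty of the theorem is concentrated. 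There is no soft reason the discrepancy should be a transfer: for wild actions $\textnormal{Im}(\textnormal{Tr})$ is a proper ideal (it vanishes on the fixed locus $x_1=\ldots=x_g=0$), so surjectivity-type arguments are unavailable, and Campbell--Hughes need an intricate, paper-length induction to prove exactly this statement. Your final step (traces with $\sum_i a_i\le 2(p-1)$ lie in $C$) is a second, smaller gap: it is genuinely needed, since by homogeneity no generator of degree $>2(p-1)$ can appear in an expression for such a trace, and your expansion formula for $\textnormal{Tr}(y^a)$ is correct, but the assembly of the surviving blocks with $\sum_i b_i\in\{p-1,\,2(p-1)\}$ into polynomials in the $u_{ij}$ and $\textnormal{N}(y_i)$ is asserted rather than carried out --- as you yourself acknowledge. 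In short: the proposal is a reasonable reduction skeleton, but the two statements it leaves open are not technical details; together they constitute the content of the Campbell--Hughes theorem.
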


As the equality $\textnormal{N}(y_i)= y_i^2+x_i y_i = j_{E_i}^{\frac{1}{2}} x_i^3 + j_{E_i}^{-1} x_i$ holds in $R$, one can omit these elements from a generating system of $R^\iota$. Furthermore, $x_i = \textnormal{Tr}(y_i)$ and $x_i y_j - x_j y_i = \textnormal{Tr}(y_i y_j) + x_1 x_2$ can be written as traces, so we get the following corollary:

\begin{cor}\label{erzeuger}
Assume that $A$ is the product of $g$ ordinary elliptic curves. Then the ring $R^\iota$ (with $R$ and group action as in Proposition~\ref{situation}) is generated by all elements of the form $\textnormal{Tr}(y_{i_1} \cdot \ldots \cdot y_{i_m})$.
\end{cor}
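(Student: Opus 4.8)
The plan is to derive the Corollary from Proposition~\ref{campbell_erz} by showing that each of the four families of generators listed there can, modulo the ideal $I$ defining $R$, be written as a polynomial in elements of the form $\textnormal{Tr}(y_{i_1}\cdots y_{i_m})$. This is a purely algebraic reduction: I want to eliminate three of the four families and re-express the survivors as traces of products of the $y_i$.

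**First**, I would handle the norms $\textnormal{N}(y_i)$. Since $p=2$ and $\sigma(y_i)=y_i+x_i$, one has $\textnormal{N}(y_i)=y_i\,\sigma(y_i)=y_i(y_i+x_i)=y_i^2+x_iy_i$. The defining relation $y_i^2+x_iy_i+j_{E_i}^{1/2}x_i^3+j_{E_i}^{-1}x_i=0$ in $R$ then gives $\textnormal{N}(y_i)=j_{E_i}^{1/2}x_i^3+j_{E_i}^{-1}x_i$, which already lies in the subring generated by the $x_i$. So the norms are redundant once the $x_i$ are available as traces. **Next**, I would treat the linear and bidegree-$(1,1)$ generators. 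For the characteristic-two action, $\textnormal{Tr}(y_i)=y_i+\sigma(y_i)=y_i+(y_i+x_i)=x_i$, so each $x_i$ is itself a trace. For the brackets, a direct expansion of $\textnormal{Tr}(y_iy_j)$ using $\sigma(y_iy_j)=(y_i+x_i)(y_j+x_j)$ yields $\textnormal{Tr}(y_iy_j)=x_iy_j+x_jy_i+x_ix_j$, whence $x_iy_j-x_jy_i=\textnormal{Tr}(y_iy_j)+x_ix_j$ in characteristic $2$ (matching the identity recorded just before the corollary, up to the obvious index correction in the $x_ix_j$ term). Since $x_i$ and $x_j$ are traces, the bracket $x_iy_j-x_jy_i$ is a polynomial in traces as well.

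**The only remaining family** is the trace family $\textnormal{Tr}(y_1^{a_1}\cdots y_g^{a_g})$ with $0\le a_i\le p-1$ and $\sum_i a_i>2(p-1)$; specializing to $p=2$ forces each $a_i\in\{0,1\}$, so these are exactly the traces $\textnormal{Tr}(y_{i_1}\cdots y_{i_m})$ of products of distinct $y_i$ with $m>2$. Together with $\textnormal{Tr}(y_i)=x_i$ and the $\textnormal{Tr}(y_iy_j)$ absorbing the brackets, this shows the whole Campbell--Hughes generating set lies in the subalgebra generated by traces $\textnormal{Tr}(y_{i_1}\cdots y_{i_m})$ for all $m\ge 1$. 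Conversely every such trace is manifestly $\iota$-invariant and lies in $R^\iota$, so these elements do generate.

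**The main obstacle** I anticipate is bookkeeping rather than conceptual: one must be careful that Proposition~\ref{campbell_erz} is stated for $S_K$ over the prime field $\mathbb{F}_p$, whereas $R$ is an algebra over the algebraically closed $k$ and carries the extra defining relations. The passage from $S_K^G$ to $R^\iota$ requires knowing that extending scalars from $\mathbb{F}_2$ to $k$ preserves the generating property of the listed invariants and that one may then reduce modulo $I$; the norm-elimination step is exactly where the relations enter, and it is the one place where $R$ genuinely differs from the free polynomial ring. Once these two points are in hand, the corollary follows by simply collecting the surviving generators.
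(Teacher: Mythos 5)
Your norm--trace manipulations coincide with the paper's: in $R$ one has $\textnormal{N}(y_i)=y_i^2+x_iy_i=j_{E_i}^{1/2}x_i^3+j_{E_i}^{-1}x_i$, the traces give $\textnormal{Tr}(y_i)=x_i$ and $x_iy_j-x_jy_i=\textnormal{Tr}(y_iy_j)+x_ix_j$ (you are right that the paper's ``$x_1x_2$'' is an index slip), and for $p=2$ the Campbell--Hughes trace family is exactly the traces of squarefree monomials in the $y_i$ of degree $\geq 3$. The scalar-extension issue you raise is also genuinely minor: invariants are the kernel of the $k$-linear map $\sigma-\mathrm{id}$, and tensoring with the field extension $k/\mathbb{F}_2$ is exact, so the listed invariants still generate $\bigl(k[x_1,y_1,\ldots,x_g,y_g]\bigr)^G$.

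The genuine gap is the step you dismiss as ``bookkeeping rather than conceptual'': reducing modulo $I$. Knowing generators of $S^G$ for $S=k[x_1,y_1,\ldots,x_g,y_g]$ only shows that their residue classes generate the \emph{image} of the map $S^G\rightarrow (S/I)^G=R^\iota$; it does not show this map is surjective, i.e.\ that every $\iota$-invariant element of $R$ lifts to an invariant polynomial. For wild actions this is precisely the step that can fail, and the paper exhibits a counterexample right after Proposition~\ref{sr}: for the $\mathcal{S}_2$-action on $\mathbb{F}_2[X,Y]$ and the invariant ideal $I=(X+Y)$, the class of $X$ is invariant in the quotient but is not the image of any invariant polynomial. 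This is exactly why the paper proves Proposition~\ref{sr}: for actions $\sigma(B_i)=B_i+A_i^{e_i}$ and relations of the special shape $f_i=B_i^p-A_i^{(p-1)e_i}B_i+P_i(A_i)$ with $P_i\in A_i\,k[A_i]$, the map $S^G\rightarrow (S/I)^G$ \emph{is} surjective; the Weierstra\ss\ relations $y_i^2+x_iy_i+j_{E_i}^{1/2}x_i^3+j_{E_i}^{-1}x_i$ have this shape with $p=2$, $e_i=1$, $P_i=j_{E_i}^{1/2}x_i^3+j_{E_i}^{-1}x_i$. So your argument is not wrong, but it is incomplete as written: it only proves that the traces generate a subring of $R^\iota$, and you must invoke Proposition~\ref{sr} (after checking its hypotheses) to conclude that this subring is all of $R^\iota$. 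That invocation is the decisive ingredient of the paper's proof, not a routine reduction.
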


The ring $R^\iota$ can also be computed directly without using Richman's result or Proposition~\ref{campbell_erz}. Observe that
\begin{align*}
f_M := \textnormal{Tr}\left( \prod_{i \in M} y_i \right) = \sum_{d=0}^{|M|-1} \ \sum_{\substack{N \subset M, \\ |N| = d}} \ \ \prod_{m \in M \smallsetminus N} x_m \ \prod_{n \in N} y_n
\end{align*}
holds, in particular we have $\textnormal{deg}_{x_i}(Q) + \textnormal{deg}_{y_i}(Q) = 1$ for every monomial $Q$ of $f_M$ and $f_M$ is the sum of all such monomials except of $\prod_{i \in M} y_i$. We regard the $f_M$ as elements of the vector space
$$
V= \lbrace f \in k(x_1, \ldots, x_g)[y_1, \ldots, y_g]  \ | \ \textnormal{deg}_{y_i}(f) \leq 1 \textnormal{ for every } 1 \leq i \leq g \rbrace
$$
over the field $k(x_1, \ldots, x_g)$ and compute the eigenvectors for the eigenvalue $\lambda = 1$ of the linear map induced by $\iota$. If $a_1, \ldots, a_l \in V$ is a basis of the eigenspace $V^\iota$, then the elements $a_1, \ldots, a_l, y_g a_1, \ldots, y_g a_l $ are linearly independent in $V$. In fact, if $0= \mu_1 a_1 + \ldots + \mu_l a_l + \lambda_1 y_g a_1 + \ldots + \lambda_l y_g a_l$ is a linear combination of zero, then applying the linear map  $\iota -1$ yields $0 = x_g (\lambda_1 a_1 + \ldots + \lambda_l a_l)$ which is only possible if the coefficients $\lambda_i$ vanish for every $i$; as a consequence, also all $\mu_i$ vanish. Hence $l \leq 2^{g-1}$. On the other hand, a similar argument shows that the $2^{g-1}$ elements of the form $f_{M \cup \{g\}}$ with $M \subset \{ 1, \ldots, g-1 \}$ are linearly independent, thus $l = 2^{g-1}$ and a basis of $V^\iota$ is found. In the last step one has to compute the set $V^\iota \cap k[x_1, y_1, \ldots, x_g, y_g]$. If $f= \sum_M {\lambda_{M \cup \{g \}}} \cdot f_{M \cup \{g\}}$ is a linear combination with coefficients from $k(x_1, \ldots, x_g)$, then one can reduce the denominator of $\lambda_{M \cup \{g \}}$ either to $1$ or $x_g$: Choose a subset $M'$ of maximal cardinality such that $\lambda_{M' \cup \{g \}}$ is non-zero. Then $f_{M' \cup \{ g \}}$ is the only element in the sum that contains the monomial $x_g \cdot \prod_{i \in M'} y_i$, hence the denominator of $\lambda_{M' \cup \{ g \}}$ has to divide $x_g$. Apply the same argument to $f + \lambda_{M' \cup \{ g \}} \cdot f_{M' \cup \{g\}}$ to get the statement for all denominators.

Now it suffices to consider
$$
f= \sum_{M \subset \{ 1, \ldots, g-1 \}} \frac{\mu_{M \cup \{g\}}}{x_g} \cdot f_{M \cup \{g\}} \in k[x_1, y_1, \ldots, x_g, y_g]
$$
with $\mu_{M \cup \{ g\}} \in k[x_1, \ldots, x_{g-1}]$. The formal partial derivative $\partial f/\partial y_g$ vanishes as $\textnormal{deg}_{y_g}(f)\leq 0$. On the other hand we have $\textnormal{deg}_{x_g}(f)=0$, so $f$ is an invariant element of  $k[x_1, y_1, \ldots, x_{g-1}, y_{g-1}]$. By induction the statement follows. (The case $g=1$ is trivial: Here $R^\iota = k[x_1]$, generated by $x_1 = \textnormal{Tr}(y_1)$.)

Next, we consider the case that supersingular factors appear in the product of elliptic curves. Nearly the same computation is possible:

\begin{prop}\label{invgen2}
With the notation as in Proposition~{\ref{situation}}, the invariant ring $R^\iota$ is generated by the elements
\begin{align*}
 &v_{r+1}, \ldots, v_g , \notag \\
 &f_{M \cup N}=\textnormal{Tr}\left( \prod_{i \in M} y_i \cdot \prod_{j \in N} w_j \right),
\end{align*}
where $M \subset \lbrace 1, \ldots, r \rbrace$, $N \subset \lbrace r+1, \ldots, g  \rbrace$ run through all subsets.
\end{prop}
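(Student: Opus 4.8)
The plan is to repeat the direct eigenspace computation carried out for the ordinary case (the computation leading to Corollary~\ref{erzeuger}), now allowing the supersingular variables $v_j,w_j$, and to keep track of one extra phenomenon: the trace of $w_j$ produces only $v_j^2$, not $v_j$, so the $v_j$ themselves must be adjoined. First I would invert the fixed variables. Since each defining relation is monic of degree $2$ in $y_i$ (resp. $w_j$), the ring $R$ is free over the base ring $B_0 = k[x_1,\ldots,x_r,v_{r+1},\ldots,v_g]$ with the $2^g$ square-free monomials $y_M w_N$ as a basis, where $y_M = \prod_{i\in M} y_i$ and $w_N = \prod_{j\in N} w_j$. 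Writing $K = \textnormal{Frac}(B_0)$ and $V = R\otimes_{B_0} K$, the involution acts $K$-linearly on the $2^g$-dimensional space $V$, and $R^\iota = R\cap V^\iota$. Because $K$ is fixed by $\iota$, we have $B_0 = R\cap K \subseteq R^\iota$; as $B_0 = k[x_i,v_j]$ with $x_i = \textnormal{Tr}(y_i)=f_{\{i\}}$ a trace while $v_j = \textnormal{N}(w_j)$ is not (indeed $\textnormal{Tr}(w_j)=v_j^2$), the $v_j$ must appear among the generators. This already explains the shape of the asserted generating set.

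Next I would compute $V^\iota$. Since $\iota^2 = 1$ in characteristic $2$, the operator $\iota-1$ satisfies $(\iota-1)^2 = 0$, and a short computation gives $f_{M\cup N} = \textnormal{Tr}(y_M w_N) = (\iota-1)(y_M w_N)$, the top monomial cancelling. If $r=g$ we are in the purely ordinary situation already settled, so assume $r<g$ and single out the supersingular index $g$, for which $(\iota-1)(w_g)=v_g^2$. Exactly as in the ordinary computation I would show the $2^{g-1}$ traces $f_{M\cup N}$ with $g\in N$ are $K$-linearly independent, by isolating for each of them the monomial $y_M w_{N\smallsetminus\{g\}}$, which occurs with the nonzero scalar $v_g^2$ and only in members of the family with larger index, and running a downward induction on $|M|+|N|$. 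As these traces are the images under $\iota-1$ of basis vectors of $V$, their independence forces $\textnormal{rank}(\iota-1)\ge 2^{g-1}$, so rank--nullity gives $\dim_K V^\iota\le 2^{g-1}$; since they also lie in $V^\iota$, they constitute a $K$-basis of $V^\iota$.

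The substantive step is to descend from $V^\iota$ to $R^\iota=R\cap V^\iota$ and to show the resulting invariants are generated over $B_0$ by the traces. Here I would use the recursion coming from $R = R'\oplus R' w_g$, where $R' = R_{g-1}[v_g]$ is the ring of the first $g-1$ factors with the fixed variable $v_g$ adjoined: comparing coefficients in the basis $\{1,w_g\}$ shows that an element $\xi_0+\xi_1 w_g$ is invariant precisely when $\xi_1\in (R')^\iota$ and $\textnormal{Tr}(\xi_0)=v_g^2\,\xi_1$. Solving this equation for $\xi_0$ and invoking the inductive description of $(R')^\iota$ --- or, equivalently, mirroring the denominator-clearing argument of the ordinary case with $x_g$ replaced by $v_g$ --- reduces every invariant to a $B_0$-polynomial in the traces $f_{M\cup N}$. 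Combined with the first paragraph, which supplies $B_0=k[x_i,v_j]$ and identifies $x_i=f_{\{i\}}$ as traces, this yields that $R^\iota$ is generated by the $v_j$ together with all $f_{M\cup N}$.

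I expect the descent to be the main obstacle. In the supersingular direction $\iota-1$ outputs the square $v_g^2$ rather than a linear coordinate, so clearing denominators against the traces only ever recovers $v_g^2$; the honest generator $v_g=\textnormal{N}(w_g)$ is \emph{not} a $B_0$-combination of traces and must be supplied from the base ring $B_0$. Verifying that $B_0$ together with the traces captures \emph{all} invariants --- that is, that no further generators beyond the $v_j$ are forced by the interaction of the relation $w_j^2+v_j^2 w_j+v_j=0$ with the trace equation $\textnormal{Tr}(\xi_0)=v_g^2\xi_1$ --- is the delicate bookkeeping that the induction on $g$ must control.
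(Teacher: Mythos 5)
Your first two paragraphs are sound: $R$ is indeed free over $B_0$ on the square-free monomials $y_Mw_N$, the traces $f_{M\cup N}$ with $g\in N$ are $K$-linearly independent and, by rank--nullity applied to $\iota-1$, form a $K$-basis of $V^\iota$; the invariance criterion for $\xi_0+\xi_1w_g$ (namely $\xi_1\in(R')^\iota$ and $\textnormal{Tr}(\xi_0)=v_g^2\xi_1$) is also correct. The genuine gap is the one you flag yourself: the descent from $V^\iota$ to $R^\iota$ is asserted rather than proved, and ``mirroring the denominator-clearing argument with $x_g$ replaced by $v_g$'' is not a faithful mirror. Two things break. First, isolating the extremal basis monomial $y_{M'}w_{N'\smallsetminus\{g\}}$ in an invariant only bounds the denominators by $v_g^2$, since $v_g^2$ is the scalar the supersingular direction produces. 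Second, the endgame of the ordinary argument --- conclude $\textnormal{deg}_{x_g}(f)=0$ and induct in fewer variables --- has no analogue here: invariants must be allowed to contain $v_g$ (it is itself a generator), so the induction statement and its proof must change shape. Concretely, what is missing is the implication: if $\xi_1\in(R')^\iota$ and $v_g^2\xi_1\in\textnormal{Tr}(R')=\sum_{(M,N)\neq(\emptyset,\emptyset)}B_0\,f_{M\cup N}$ (sum over $N\not\ni g$), then $\xi_1=\sum_{(M,N)}c_{MN}\,f_{M\cup N}$ with $c_{MN}\in B_0$. That statement is exactly what rules out further generators, and nothing in your write-up proves it.

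It is provable, so your route can be completed: write $v_g^2\xi_1=\sum b_{MN}f_{M\cup N}$, divide each $b_{MN}$ by $v_g^2$ with remainder in $B_0=\bigl(k[x_1,\ldots,x_r,v_{r+1},\ldots,v_{g-1}]\bigr)[v_g]$, and compare $v_g$-degrees in the free module $R_{g-1}[v_g]$, using that the $f_{M\cup N}$ with $g\notin N$ contain no $v_g$; this forces $\xi_1=\sum c_{MN}f_{M\cup N}$ with $c_{MN}\in B_0$. Then $\eta=\sum c_{MN}f_{(M\cup N)\cup\{g\}}$ lies in the candidate subalgebra and has $w_g$-coefficient $\sum c_{MN}f_{M\cup N}=\xi_1$ (because $f_{(M\cup N)\cup\{g\}}=f_{M\cup N}\,w_g+\iota(y_Mw_N)\,v_g^2$), so $\xi_0+\xi_1w_g-\eta\in(R')^\iota$ and induction on the number of factors finishes. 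The paper avoids this entire discussion with a different device: it takes the unique multilinear representative $f_1$, sorts its monomials by the parity pattern $L$ of the $v_j$-exponents, and writes $f_1=\sum_L\bigl(\prod_{j\in L}v_j\bigr)\tilde{h}_L$ with $\tilde{h}_L$ a polynomial in the $x_i,y_i,v_j^2,w_j$. Since $\iota$ fixes the $v_j$ and preserves each parity class, $f_1$ is invariant iff every $\tilde{h}_L$ is, and on $k[x_i,y_i,v_j^2,w_j]$ the action $w_j\mapsto w_j+v_j^2$ is literally the ordinary linear action with $v_j^2$ as the fixed coordinate, so Corollary~\ref{erzeuger} (via Proposition~\ref{sr}) applies verbatim; the prefactors $\prod_{j\in L}v_j$ are precisely where the extra generators $v_j$ enter. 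Either supply the degree-comparison argument above, or switch to the parity decomposition.
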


\begin{proof}
Consider  the $k$-algebras $S_{n} = k\left[x_1, y_1, \ldots, x_r, y_r,\, v_{r+1}^n, w_{r+1}, \ldots, v_g^n, w_g \right]$   for $n \in \{ 1, 2\}$, 
and the vector spaces
\begin{align*}
V_{n} &=\lbrace f \in S_{n} \ |\ \textnormal{deg}_{y_i}(f) \leq 1,\ \textnormal{deg}_{w_j}(f) \leq 1\ \textnormal{for every}  \ 1\leq i \leq r < j \leq g  \rbrace \subset S_n
\end{align*}
with the $\mathbb{Z}/2\mathbb{Z}$-action given by $$\iota(x_i) = x_i, \quad \iota(y_i) = y_i + x_i, \quad {\iota}(v_j^n)=v_j^n, \quad {\iota}(w_j)=w_j + v_j^2.$$
For every element of $R=S_1/I$ we can find a unique representative in $V_1$. Let $f_1 \in V_1$ be a polynomial and let $L \subset \lbrace r+1, \ldots, g \rbrace$  be a subset. Let $h_L \in V_1$ be the sum of all monomials $Q$ of $f_1$, for which the condition
\begin{equation} \notag
j \in L \, \Longleftrightarrow \, \textnormal{deg}_{v_j}(Q) \equiv 1 \ \textnormal{mod } 2
\end{equation}
holds. The polynomial $h_L$ is the sum of all monomials of $f_1$, where the indeterminates $v_j$ with $j \in L$ occur with odd exponents, whereas the other $v_i$ have even exponents. Of course, $f_1$ is the sum of the $h_L$ and we get
\begin{equation} \notag
f_1 = \sum_L h_L = \sum_{L} \left( \prod_{j \in L} v_j \right) \tilde{h}_L
\end{equation}
for suitable polynomials $\tilde{h}_L \in V_2$. As ${\iota}(V_2) = V_2$ and ${\iota}(v_j)=v_j$, an element $f_1 \in V_1$ is $\iota$-invariant iff the polynomials $\tilde{h}_L \in V_2$ are $\iota$-invariant.
The action of $\iota$ on $V_2$ is of the form for which the invariant elements are known. By Corollary~\ref{erzeuger}, the residue class of every invariant element $f_2 \in V_2^{{\iota}}$ can be written as polynomial in the traces, hence the $f_{M \cup N}$ and $v_j$ build up a generating set of the $k$-algebra $R^\iota$.
\end{proof}

In the setting of Proposition~\ref{campbell_erz}, the relations between the generators are unknown in general. However, in the case $p=2$ (which we are interested in) Campbell and Wehlau gave a generating set for the ideal of relations in \cite{campbell2}, Theorem~3.6. These relations are

\begin{align*}
(i)\quad &\sum_{L \subsetneq A} x_{A \smallsetminus L} \, \textnormal{Tr}\left(y_{L}\right) = 0,  \\
(ii) \quad &\textnormal{Tr}\left(y_A\right) \, \textnormal{Tr}\left(y_B\right) = \sum_{L \subsetneq A \cap B} x_{(A \cap B) \smallsetminus L} \ \textnormal{N}\left(y_L\right) \, \textnormal{Tr}\left(y_{(A \cup B) \smallsetminus L}\right) \quad \quad \quad \quad \quad \quad \quad \notag \\
 &\quad \quad \quad \quad \quad \quad \quad \quad \quad \quad \quad +\textnormal{N}\left(y_{A \cap B}\right)  \sum_{M \subsetneq A \smallsetminus B} x_{(A \smallsetminus B) \smallsetminus M} \ \textnormal{Tr}\left(y_{M \cup (B \smallsetminus A)} \right),
\end{align*}
where we use the compact notation $x_M = \prod_{i \in M} x_i$, same for $y_M$. For every subset $M \subset \lbrace 1, \ldots, g \rbrace$ let $M' = M \cap \lbrace 1, \ldots, r \rbrace$, $M'' = M \cap \lbrace r+1,  \ldots, g \rbrace$. In the factor ring $R$ we can simplify the formulas by replacing the norm by suitable polynomials in the indeterminates $x_i$ and $v_j$:
\begin{align*}
 &\textnormal{N}\left(y_{L'} w_{L''}\right) = \textnormal{N}\left(y\right)_{L'} \cdot \ \textnormal{N}\left( w\right)_{L''} =P_{L'} \cdot \ P_{L''} = P_L, \\
 &\textnormal{N}\left(y_{A' \cap B'}\ w_{A'' \cap B''}\right) = P_{{A' \cap B'}} \cdot \ P_{A'' \cap B''} = P_{A \cap B}
\end{align*}
and the polynomial $P_i$ is given by $P_i = \gamma_i x_i^3 + \gamma_i^{-2} x_i$ for $i \in M'$ where $\gamma_i^2$ equals the $j$-invariant of the elliptic curve $E_i$, and $P_j = v_j$ for $j \in M''$. Again, we write $P_L= \prod_{i \in L} P_i$.
Now for arbitrary subsets $A, B, D \subset \lbrace 1, \ldots, g \rbrace$, the following holds in $R^\iota$:
\begin{align}
(i)\ &\sum_{L\subsetneq D} x_{D' \smallsetminus L'} \, v^2_{D'' \smallsetminus L''} \, \textnormal{Tr}\left(y_{L'} w_{L''}\right) = 0,\notag \\
(ii) \ &\textnormal{Tr}\left(y_{A'} w_{A''}\right) \, \textnormal{Tr}\left(y_{B'} w_{B''}\right)\notag \\ &\ =\sum_{{L \subsetneq A \cap B }} x_{(A' \cap B') \smallsetminus L'} \ v^2_{(A'' \cap B'') \smallsetminus L''}\ P_L \, \textnormal{Tr}\left(y_{(A' \cup B') \smallsetminus L'} \ w_{(A'' \cup B'') \smallsetminus L''}\right)  \notag \\
 &\  +P_{A \cap B}  \sum_{{M \subsetneq A \smallsetminus B}} x_{(A' \smallsetminus B') \smallsetminus M'} \ v^2_{(A'' \smallsetminus B'') \smallsetminus M''} \ \textnormal{Tr}\left(y_{M' \cup (B' \smallsetminus A')}\ w_{M'' \cup (B'' \smallsetminus A'')} \right) \notag
\end{align}

We now show that these relations again generate all relations between the generators in $R^\iota$.

\begin{prop}\label{endlerzkalg}
Let $R$ and $R^\iota$ be as in Proposition~\ref{situation}. Furthermore, we define
\begin{align*}
R_0=k\left[T_{M}, X_i, V_j \ | \ M \subset \lbrace 1, \ldots, g \rbrace, \ i = 1, \ldots, r,\ j=r+1, \ldots, g  \right]
\end{align*}
and the homomorphism of $k$-algebras $\psi\colon R_0 \rightarrow R^\iota$ by
$T_{M} \mapsto \textnormal{Tr}(y_{M'} \, w_{M''})$, $X_i \mapsto x_i$,  $V_j \mapsto v_j$.
Then $\psi$ is surjective and $\textnormal{ker}(\psi)$ is generated by
\begin{align}
   &T_\emptyset\, , \quad X_i - T_{\lbrace i \rbrace}\, ,  \quad V_j^2 - T_{\lbrace j \rbrace}\, , \label{Typnot2} \\
  &\sum_{{L \subsetneq D}} X_{D' \smallsetminus L'}\, V_{D'' \smallsetminus L''}^2 \, T_{L} \label{Typleicht2}, \\
&T_A T_B + \sum_{L \subsetneq A \cap B} X_{(A' \cap B') \smallsetminus L'} \, V^2_{(A'' \cap B'') \smallsetminus L''}\ P_L  \ T_{(A \cup B) \smallsetminus L} \label{Typschwer2} \\
 &\quad \quad \quad + P_{A \cap B}   \sum_{M \subsetneq A \smallsetminus B} X_{(A' \smallsetminus B') \smallsetminus M'}\, V^2_{(A'' \smallsetminus B'') \smallsetminus M''} \ T_{M \cup (B \smallsetminus A)}  \notag
\end{align}
for $1 \leq i \leq r < j \leq g$, $A, B, D \subset \lbrace 1, \ldots, g \rbrace$ with $|A|, |B| \geq 2$, $|D| \geq 3$.
\end{prop}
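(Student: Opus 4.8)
Write $\Pi = k[x_1,y_1,\ldots,x_r,y_r,v_{r+1},w_{r+1},\ldots,v_g,w_g]$ for the ambient polynomial ring, so that $R = \Pi/I$ with $I$ generated by $f_i = \textnormal{N}(y_i) - P_i$ for $1\le i\le r$ and $h_j = \textnormal{N}(w_j) - v_j$ for $r<j\le g$. Surjectivity of $\psi$ is immediate from Proposition~\ref{invgen2}: the listed algebra generators of $R^\iota$ are exactly the images $\psi(V_j) = v_j$ and $\psi(T_M) = \textnormal{Tr}(y_{M'}w_{M''})$, together with $\psi(X_i) = x_i = \textnormal{Tr}(y_i)$. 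For the inclusion $J\subseteq\ker\psi$ I would simply evaluate the generators: $\psi(T_\emptyset) = \textnormal{Tr}(1) = 0$, $\psi(X_i - T_{\{i\}}) = x_i - \textnormal{Tr}(y_i) = 0$ and $\psi(V_j^2 - T_{\{j\}}) = v_j^2 - \textnormal{Tr}(w_j) = 0$ in characteristic $2$, while the images of \eqref{Typleicht2} and \eqref{Typschwer2} are precisely the relations $(i)$ and $(ii)$ already shown to hold in $R^\iota$. Everything therefore reduces to the reverse inclusion $\ker\psi\subseteq J$.

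The plan for $\ker\psi\subseteq J$ is to transport a known presentation of a linear invariant ring through Proposition~\ref{sr}. That proposition (with $e_i = 1$ on the ordinary and $e_j = 2$ on the supersingular pairs) gives $R^\iota\cong\Pi^\iota/(I\cap\Pi^\iota)$, so it suffices to present $\Pi^\iota$ and to pin down the ideal $I\cap\Pi^\iota$. For the presentation I would invoke Campbell--Wehlau (\cite{campbell2}, Theorem~3.6) after the parity trick from the proof of Proposition~\ref{invgen2}: setting $u_j = v_j^2$ turns each supersingular pair $(v_j^2,w_j)$ into an ordinary pair, since $w_j\mapsto w_j + v_j^2$ is linear in $u_j$. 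Thus the even-$v_j$-degree subring $\bar S^\iota\subseteq\Pi^\iota$ (with $\bar S = k[x_i,y_i,v_j^2,w_j]$) is an ordinary invariant ring on the pairs $(x_i,y_i)$ and $(u_j,w_j)$, presented by the generators $x_i,u_j,\textnormal{Tr}(\cdots),\textnormal{N}(y_i),\textnormal{N}(w_j)$ with the full set of relations $(i),(ii)$; and $\Pi^\iota = \bar S^\iota[v_{r+1},\ldots,v_g]/(v_j^2 - u_j)$ because $\iota$ fixes the $v_j$ and $\Pi$ is free over $\bar S$ on the monomials $\prod_{j\in L}v_j$.

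The crux, and the step I expect to be the main obstacle, is the identity
\[
I\cap\Pi^\iota = (f_1,\ldots,f_r,h_{r+1},\ldots,h_g)\,\Pi^\iota,
\]
that $I\cap\Pi^\iota$ is generated by nothing more than the defining relations; this is exactly the compatibility between invariants and quotients that can fail for wild actions (compare the Example in Section~2), so it must exploit the special shape here. I would pass to the polynomial subring $T = \bigotimes_{i\le r}k[x_i,\textnormal{N}(y_i)]\otimes\bigotimes_{j>r}k[v_j,\textnormal{N}(w_j)]\subseteq\Pi^\iota$, over which $\Pi$ is free of rank $2^g$ on the square-free monomials in the $y_i,w_j$. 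The decisive point is that $\iota-1$ is $T$-linear with all matrix entries in the subring $k[x_i,v_j]$ — the monomials in $x_i$ and $v_j^2$ produced by $\iota(y_i) = y_i + x_i$ and $\iota(w_j) = w_j + v_j^2$ — while $\{x_i,v_j,f_i,h_j\}$ is a polynomial coordinate system on $T$ with $k[x_i,v_j]\xrightarrow{\ \sim\ }T/I_T$ for $I_T = (f_i,h_j)$. Hence $\iota - 1$, its kernel $\Pi^\iota$, and the quotient $Q = \Pi/\Pi^\iota$ are all base-changed from $k[x_i,v_j]$, giving $Q\cong Q_0[f_i,h_j]$ as a $T$-module; on this module $f_i,h_j$ are independent polynomial variables, hence a regular sequence, so $\textnormal{Tor}_1^T(T/I_T,Q) = 0$. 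Tensoring the exact sequence $0\to\Pi^\iota\to\Pi\to Q\to 0$ with $T/I_T$, the vanishing of this Tor forces $\Pi^\iota/I_T\Pi^\iota\to\Pi/I$ to be injective, which is precisely $I\cap\Pi^\iota = I_T\,\Pi^\iota = (f_i,h_j)\Pi^\iota$.

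To finish I would assemble the pieces. Quotienting the presentation of $\Pi^\iota$ by $(f_i,h_j)$ imposes $\textnormal{N}(y_i) = P_i$ and $\textnormal{N}(w_j) = v_j$ and eliminates the norm generators; under this substitution relation $(i)$ becomes \eqref{Typleicht2} (with the factors $u_j = v_j^2 = V_j^2$), relation $(ii)$ has its norms replaced by the products $P_L$ and becomes \eqref{Typschwer2}, the identities $\textnormal{Tr}(1) = 0$, $x_i = \textnormal{Tr}(y_i)$, $u_j = \textnormal{Tr}(w_j)$ give \eqref{Typnot2}, and $v_j^2 - u_j$ together with $u_j = T_{\{j\}}$ gives $V_j^2 - T_{\{j\}}$. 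Comparing the resulting relations with the generators of $J$ then shows $\ker\psi = J$, as required.
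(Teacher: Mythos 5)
Your proposal is correct, and its decisive step is genuinely different from the paper's. The shared skeleton is the same on both sides: surjectivity via Proposition~\ref{invgen2}, the easy inclusion $J\subseteq\ker\psi$, Proposition~\ref{sr}, the Campbell--Wehlau relations, the parity substitution $u_j=v_j^2$ for the supersingular pairs, and the final elimination of the norm generators via $\textnormal{N}(y_i)\mapsto P_i$, $\textnormal{N}(w_j)\mapsto v_j$. The difference is how the reverse inclusion $\ker\psi\subseteq J$ is obtained. The paper never computes $I\cap\Pi^\iota$ (in your notation; $\Pi$ is the paper's ambient ring $S$): it takes a kernel element, reduces it modulo the quadratic relations \eqref{Typschwer2} to a representative that is linear in the $T_M$, observes that the resulting identity $\sum_M \mu_M\,\textnormal{Tr}(y_{M'}w_{M''})=0$, a priori valid only in $R$, already holds in the polynomial ring $\Pi$ by the uniqueness of representatives with $\deg_{y_i},\deg_{w_j}\le p-1$ established in the proof of Proposition~\ref{sr}, and only then invokes Campbell--Wehlau and pushes those relations into $J$ by substituting the norms. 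You instead prove the stronger ideal-theoretic statement $I\cap\Pi^\iota=(f_i,h_j)\Pi^\iota$ outright, and I checked that your homological argument goes through: $\Pi$ is indeed free of rank $2^g$ over $T=k[x_i,\textnormal{N}(y_i),v_j,\textnormal{N}(w_j)]$ on the square-free monomials, $\iota-1$ is $T$-linear with matrix entries in $C=k[x_i,v_j]$, $T=C[f_i,h_j]$ is a polynomial ring so that kernel, image and cokernel of $\iota-1$ all arise by flat base change from $C$, whence $(f_i,h_j)$ is a regular sequence on $Q=Q_0[f_i,h_j]$ and the Koszul resolution gives $\textnormal{Tor}_1^T(T/I_T,Q)=0$ and the desired injectivity of $\Pi^\iota/I_T\Pi^\iota\to\Pi/I$. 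Combined with the presentation $\Pi^\iota=\bar S^\iota[v_{r+1},\ldots,v_g]/(v_j^2-u_j)$, the statement then follows by pure elimination. What each route buys: the paper's argument is elementary and short, with no homological algebra, but it leans on already having the quadratic relations in hand to perform the degree reduction, so it is specific to the $p=2$ presentation; your argument isolates a reusable structural fact --- the contraction of $I$ to the invariant ring is generated by the defining equations themselves, which is precisely the ideal-level compatibility of invariants and quotients that can fail for wild actions (compare the Example in Section~2) --- so that a presentation of $R^\iota$ follows mechanically from any presentation of the ambient invariant ring, for any $p$; for instance, it would apply verbatim to the products of Artin--Schreier curves discussed in Section~5 as soon as the relations for the linear invariant ring are known.
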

Note that the relations of type \eqnumref{Typleicht2} and \eqnumref{Typschwer2} are trivial if  $|D| \leq 2$ or $|A| \leq 1 $ or $|B| \leq 1$ holds. The relations in \eqnumref{Typnot2} are only used to get a better notation.
\begin{proof}
The homomorphism $\psi$ is surjective, as a generating set of $R^\iota$ lies in the image. Let $J$ be the ideal generated by all elements of the form \eqnumref{Typnot2}-\eqnumref{Typschwer2}. The relations from $J$ are contained in the kernel of $\psi$. Set $\bar{R}_0= R_0/J$. We show that $\psi$ induces an isomorphism $\bar{\psi}\colon \bar{R}_0 \rightarrow R^\iota$.

Take an element $\bar{Q} \in \textnormal{ker}(\bar{\psi})$, i.e. $\bar{Q}$ is the residue class of a polynomial in ${R}_0$ that vanishes in $R^\iota$ when plugging in the generators. Using the relations of type \eqnumref{Typschwer2}, we can find a representative $Q_0$ of $\bar{Q}$ in the polynomial ring $R_0$ such that the condition $\textnormal{deg}_{T_M}(Q_0)\leq 1$ holds for all indeterminates $T_M$ with $|M| \geq 2$.  This means that $Q_0$ gives a relation in $R^\iota$ of the form
$$
 \sum_{|M| \geq 2} \ \mu_M \, \textnormal{Tr}(y_{M'} w_{M''}) \ = \ 0
$$
for suitable polynomials $\mu_M \in k[x_1, \ldots, x_r, v_{r+1}, \ldots v_g]$. Of course, this relation also holds in $R$. The proof of Proposition~\ref{sr} shows that it holds in the polynomial ring $S=k[x_1, y_1, \ldots, v_g, w_g ]$ as well and in $S^\iota$, as all occuring polynomials are invariant. Thus, $Q_0$ is a relation between the generators of the invariant ring $S^\iota$, hence generated by the relations found by Campbell and Wehlau. The ideal $J$ is generated by the residue classes of these relations and, as a consequence, $\bar{Q}=0$ in $\bar{R}_0$. So $\bar{\psi}$ is an isomorphism.
\end{proof}

Now one can read off the embedding dimension of the singularity. Recall that the embedding dimension of a noetherian local ring is by definition the cardinality of a minimal generating set of its maximal ideal. The embedding dimension of the singularity is the embedding dimension of the completion $\hat{R}^\iota$ of $R^\iota$ with respect to the singular point.

\begin{prop}\label{miniezs}
The invariant elements
\begin{equation}
 x_1, \ldots, x_r, \, v_{r+1}, \ldots, v_g , \ f_{M}=\textnormal{Tr}(y_{M'}\, w_{M''})\  \textnormal{ with }  |M| \geq 2\notag
\end{equation}
form a minimal set of generators of $R^\iota$. Furthermore, the embedding dimension of $\hat{R}^\iota$ is $2^{g}-1$.
\end{prop}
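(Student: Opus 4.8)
The plan is to reduce both assertions to a single computation of the cotangent space $\mathfrak{m}/\mathfrak{m}^{2}$ at the singular point, where $\mathfrak{m} \subset R^\iota$ denotes the maximal ideal of the image of the chosen $2$-division point (the origin). Recall that the embedding dimension of $\hat{R}^\iota$ equals $\dim_k \mathfrak{m}/\mathfrak{m}^{2}$, since completion and localization do not change this space, and that $R^\iota/\mathfrak{m}=k$ because $k$ is algebraically closed and the origin is a closed point. First I would record that the listed elements do generate: by Proposition~\ref{invgen2} the algebra $R^\iota$ is generated by the $v_j$ and the traces $f_{M}$, and since $f_{\{i\}}=\textnormal{Tr}(y_i)=x_i$ and $f_{\{j\}}=\textnormal{Tr}(w_j)=v_j^{2}$, the elements $x_1,\ldots,x_r,\,v_{r+1},\ldots,v_g$ together with the $f_M$ for $|M|\geq 2$ already generate $R^\iota$. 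Counting these gives $r+(g-r)+(2^{g}-1-g)=2^{g}-1$. Because any set of $k$-algebra generators contained in $\mathfrak{m}$ maps onto a spanning set of $\mathfrak{m}/\mathfrak{m}^{2}$, it then suffices to prove that the images of these $2^{g}-1$ elements form a \emph{basis} of $\mathfrak{m}/\mathfrak{m}^{2}$; this simultaneously yields minimality and the value $2^{g}-1$ of the embedding dimension.

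To compute $\mathfrak{m}/\mathfrak{m}^{2}$ I would invoke the presentation of Proposition~\ref{endlerzkalg}. Using the relations \eqnumref{Typnot2} to eliminate the redundant variables $T_\emptyset,T_{\{i\}},T_{\{j\}}$ via the substitutions $T_\emptyset=0$, $T_{\{i\}}=X_i$, $T_{\{j\}}=V_j^{2}$, one obtains an isomorphism $R^\iota\cong R_0'/J'$ with $R_0'=k[X_i,V_j,T_M\mid |M|\geq 2]$, a polynomial ring in exactly $2^{g}-1$ variables, and $J'$ the image of the relations \eqnumref{Typleicht2} and \eqnumref{Typschwer2}. Equip $R_0'$ with the standard grading in which every variable has degree $1$, and let $\mathfrak{n}'$ be its irrelevant ideal. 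Since $\mathfrak{m}/\mathfrak{m}^{2}\cong \mathfrak{n}'/(\mathfrak{n}'^{2}+J')$, everything comes down to the single claim $J'\subseteq \mathfrak{n}'^{2}$: once this holds, $\mathfrak{m}/\mathfrak{m}^{2}\cong \mathfrak{n}'/\mathfrak{n}'^{2}$ has dimension equal to the number of variables, namely $2^{g}-1$.

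The heart of the proof — and the step I expect to be the main obstacle — is the degree bookkeeping establishing $J'\subseteq \mathfrak{n}'^{2}$, i.e.\ that after the substitution every monomial of every relation of type \eqnumref{Typleicht2} and \eqnumref{Typschwer2} has degree at least $2$. I would verify this term by term, using $\deg X_i=\deg V_j=1$ (so $T_{\{j\}}=V_j^{2}$ contributes degree $2$), $\deg T_M=1$ for $|M|\geq 2$, and $\deg P_i\geq 1$. The delicate cases are exactly those in which a subscript of a $T$ collapses under the summation to size $0$ or $1$, so that the factor becomes $0$, an $X_i$, or $V_j^{2}$. For \eqnumref{Typleicht2}, the constraint $L\subsetneq D$ with $|D|\geq 3$ forces the prefactor $X_{D'\smallsetminus L'}V^{2}_{D''\smallsetminus L''}$ to be nontrivial, pushing each surviving term to degree $\geq 2$. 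For \eqnumref{Typschwer2}, the leading product $T_AT_B$ is quadratic since $|A|,|B|\geq 2$; in the middle sum the only candidate low-degree term occurs at $L=\emptyset$, and it either carries the nonzero prefactor $X_{A'\cap B'}V^{2}_{A''\cap B''}$ of degree $\geq 1$ when $A\cap B\neq\emptyset$, or is absent altogether because $\emptyset$ has no proper subsets when $A\cap B=\emptyset$; in the third sum the factor $P_{A\cap B}$ has degree $\geq 1$ whenever $A\cap B\neq\emptyset$ (and the trailing trace contributes degree $\geq 1$ unless it is $T_\emptyset=0$), while if $A\cap B=\emptyset$ the prefactor $X_{(A'\smallsetminus B')\smallsetminus M'}V^{2}_{(A''\smallsetminus B'')\smallsetminus M''}$ already has degree $\geq |A\smallsetminus M|\geq 1$ and the trace $T_{M\cup(B\smallsetminus A)}$ has subscript of size $\geq |B|\geq 2$. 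Checking that these observations cover every term is routine but must be carried out with care for the collapse cases; once $J'\subseteq \mathfrak{n}'^{2}$ is confirmed, the isomorphism $\mathfrak{m}/\mathfrak{m}^{2}\cong \mathfrak{n}'/\mathfrak{n}'^{2}$ gives both the minimality of the generating set and the embedding dimension $2^{g}-1$ at once.
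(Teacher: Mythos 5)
Your proposal is correct: every step checks out, including the delicate collapse cases in the degree bookkeeping, and it yields both assertions of the proposition. It is close in spirit to the paper's proof --- both arguments rest on the presentation of Proposition~\ref{endlerzkalg}, and both ultimately exploit the fact that once the redundant generators $T_\emptyset$, $T_{\lbrace i \rbrace}$, $T_{\lbrace j \rbrace}$ are disposed of, the relations \eqref{Typleicht2} and \eqref{Typschwer2} vanish to order at least $2$ at the origin --- but the organization is genuinely different. The paper never eliminates variables to form your $R_0'/J'$; instead it proves linear independence of the images of $X_i$, $V_j$, $T_M$ in $\mathfrak{m}/\mathfrak{m}^2$ by a two-step reduction: the relations \eqref{Typschwer2} show that $\mathfrak{m}^2 \subseteq \mathfrak{a} = (X_1, \ldots, X_r, V_{r+1}, \ldots, V_g)$, reducing modulo $\mathfrak{a}$ kills all coefficients $\mu_M$ (since modulo $\mathfrak{a}$ the image of $\mathfrak{m}^2$ is generated by the products $T_A T_B$), and then reducing modulo $(T_M, \, |M| \geq 2) + \mathfrak{m}^2$ kills the $\lambda_i$. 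Your single containment $J' \subseteq \mathfrak{n}'^2$ replaces this two-step reduction: it is the more systematic formulation, it makes explicit the collapse cases that the paper handles implicitly when identifying $\hat{R}^\iota/\mathfrak{a}$ with a quotient of $k\llbracket T_M \rrbracket$, and it delivers minimality and the embedding dimension simultaneously via $\mathfrak{m}/\mathfrak{m}^2 \cong \mathfrak{n}'/(\mathfrak{n}'^2 + J') = \mathfrak{n}'/\mathfrak{n}'^2$. The price is the extra elimination step, which is harmless here because the eliminated variables enter \eqref{Typleicht2} and \eqref{Typschwer2} only through the factors $T_L$ with $|L| \leq 1$; the paper's version is terser because it needs only the shape of the relations modulo $\mathfrak{a}$ rather than a full term-by-term degree count.
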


\begin{proof}
Let $\mathfrak{m} \subset \hat{R}^\iota$ denote the maximal ideal, which is generated by the images of the generators of $R^\iota$ in the completion. The minimality follows from the statement about the embedding dimension: Suppose there is a generating set $a_1, \ldots, a_d$ of $R^\iota$ with $d < 2^{g}-1$. These elements are residue classes of polynomials in $x_i, y_i, v_j, w_j$ and without loss of generality we can assume that their constant terms are zero. Hence, the ideal $(a_1, \ldots, a_d) \subset R^\iota$ is the maximal ideal of the singular point in  $\textnormal{Spec}(R^\iota)$ and we get a set of generators of $\mathfrak{m}$ consisting of less than $2^{g}-1$ elements, which is a contradiction.

We compute the embedding dimension by showing that the elements $x_i, v_j, f_M$ are linearly independent in the cotangent space $\mathfrak{m}/\mathfrak{m}^2$. This vector space has dimension at most $2^g -1$ because the residue classes of a minimal generating set of $\mathfrak{m}$ form a basis of the vector space.

After replacing $\hat{R^\iota}$ by the completion of the ring ${R_0}/\textnormal{ker}(\psi)$ from Proposition~\ref{endlerzkalg},  consider the linear combination 
\begin{align} \label{linearkombination}
\sum_{i=1}^r \lambda_i \, X_i \ + \ \sum_{i=r+1}^g \lambda_i \, V_i \ + \sum_{|M| \geq 2} \mu_M \, T_M \in \mathfrak{m}^2
\end{align}
in $\hat{R}^\iota$ with coefficients $\lambda_i, \mu_M \in k$. Denote the element \eqnumref{linearkombination} by $f$. The ideal  $\mathfrak{m}^2 \subset \hat{R^\iota}$ is generated by all products of two elements of the known generating set. The relations of type  \eqnumref{Typschwer2} show that every such product is contained in $\mathfrak{a}=(X_1, \ldots, X_r, V_{r+1}, \ldots V_g) \subset \hat{R^\iota}$; hence, $f \in \mathfrak{a}$. In $\hat{R^\iota}/\mathfrak{a}$, the condition on $f$ reads
\begin{align*}
  &\sum_{|M| \geq 2} \mu_M \, T_M \in \left(T_A \cdot T_B, \, |A|, |B| \geq 2\right) \subset k\llbracket \,T_M\,|\ |M| \geq 2 \rrbracket,
\end{align*}
so $\mu_M=0$ for every $M$. Similarly, one gets $\lambda_i=0$ by reducing \eqnumref{linearkombination} modulo  $(T_M, \, |M|\geq 2) + \mathfrak{m}^2 \subset \hat{R^\iota}$.
\end{proof}

For abelian surfaces (i.e. $g=2$), we recover some known results:
\begin{example}\label{bspfl}
Let $A=E_1 \times E_2$ be the product of two elliptic curves and let $0 \leq r \leq 2$ be the number of supersingular factors in the product. In this case a neighbourhood of a singular point of $A/\iota$ is given by the spectrum of
\begin{align*}
 &R_2=k[ X_1,X_2,T]/\left(T^2 + X_1 X_2 T + X_1^2 P_2(X_2) + X_2^2 P_1(X_1)\right), &\textnormal{if } r=2,     \\
 &R_1=k[ X_1, V_2,T]/\left( T^2 + X_1 V_2^2 T + X_1^2 V_2 + V_2^4 P_1(X_1) \right), &\textnormal{if } r=1,     \\
 &R_0= k[ V_1,V_2,T]/\left(T^2 + V_1^2 V_2^2 T + V_1^4 V_2 + V_2^4 V_1 \right), &\textnormal{if } r=0.
\end{align*}
Recall that $P_i(X_i)=X_i \cdot U_i(X_i)$ where $U_i$ becomes a unit in the completion because of its non-zero constant term. Hence, by a change of variables  $T'=T U_1^{-1} U_2^{-1}$, $X_1' = X_1 U_1^{-1}$, $X_2' = X_2 U_2^{-1}$ in $R_2$, we get 
the formal completion
\begin{align*}
\hat{R_2}= k \llbracket X_1',X_2',T' \rrbracket/\left(T'^2 + X_1' X_2' T' +  X_1'^2 X_2'+  X_1' X_2'^2 \right)
\end{align*}
which is a singularity of type $D_4^1$ in \cite{artin_rdp}. Similarly, we obtain
\begin{align*}
\hat{R_1}= k \llbracket X_1', V_2, T' \rrbracket / \left( T'^2 + X_1' V_2^2 T' + X_1'^2 V_2 + V_2^4 X_1' \right)
\end{align*}
which is a singularity of type $D_8^2$, in accordance with \cite{schroeerhilb}, Propositions~5.1 and 5.2. In the case $r=0$ the equation for the singularity is already the normal form from \cite{katsura}, Proposition~8.
\end{example}

In the case of Kummer threefolds arising from products of elliptic curves, one gets seven generators for the invariant ring $R^\iota$ and the ideal of relations is generated by ten relations of type \eqnumref{Typschwer2} and one relation of type \eqnumref{Typleicht2}.

\section{Formal groups}
If two abelian varieties have isomorphic associated formal groups, then the Kummer varieties of these abelian varieties have "formal isomorphic" singularities (the completed local rings at the singular points are isomorphic). This argument is used by Katsura \cite{katsura}, Proposition~3, where the case of ordinary abelian surfaces is settled by looking at the product of elliptic curves. In this section, we sum up some results on formal groups in order to use Katsura's argument in the higher dimensional case.

Here $k$ denotes a ground field of characteristic $p>0$. When talking of a formal group, we will usually mean a formal spectrum $G=\textnormal{Spf}(\hat{\mathcal{O}})$ with $\hat{\mathcal{O}}$ a local noetherian $k$-algebra, endowed with morphisms  that define the group structure. The formal group associated to an algebraic group is the formal completion at the identity element. An isogeny between formal groups is a homomorphism  that becomes an isomorphism in the factor category of commutative formal groups over $k$ modulo the full subcategory of formal spectra $\textnormal{Spf}(\Lambda)$ with $\Lambda$ artinian.

Manin \cite{manin} gave a classification of commutative formal groups up to isogeny:
Every finite-dimensional commutative formal group  $G$ over  $k$ is isogenous to a sum  $G \sim T \oplus U \oplus V$ where
\begin{align*}
 T = \bigoplus_{i=1}^r G_{1,0} \ , \quad U = \bigoplus_{n\geq1}  G_{n, \infty}^{\oplus r_n}\ , \quad V = \bigoplus_{\substack{n, m \geq 1 \\ (n,m)=1, }} G_{n,m}^{\oplus s_{n,m}} \
\end{align*}
for natural numbers $r$, $r_n$, $s_{n,m}$. This decomposition is unique up to isogeny.
The formal groups $G_{n,m}$,   $1 \leq n < \infty$, $0 \leq m \leq \infty$, $\textnormal{gcd}(n,m)=1$ for $m \neq \infty$ can be characterized (up to isogeny)  by the following properties:
\begin{itemize}
\item[$(i)$] $\dim(G_{n,m})=n$.
\item[$(ii)$] $G_{n,m}$ is indecomposable.
\item[$(iii)$] For $G_{n,m}$ multiplication by $p$ is an isogeny of degree $p^{n+m}$ ($m \neq \infty$).
\end{itemize}
In particular, $G_{n,m}$ and $G_{n',m'}$ with $(n,m)\neq(n',m')$ lie in  different isogeny classes. If $k$ is algebraically closed and $G$ is reduced, one can replace $G \sim T$  by $G \cong T$ (cf.~\cite{manin}, Theorem~1.2 on page 20).

Lemma 1 in \cite{manin2} shows how to compute the degree of induced isogenies between formal groups: If $\alpha \colon A \rightarrow B$ is an isogeny between algebraic groups and $\hat{\alpha}\colon \hat{A} \rightarrow \hat{B}$ denotes the isogeny between the formal groups, then $
 \textnormal{deg}\left(\hat{\alpha}\right) = \textnormal{insdeg}\left( \alpha \right)$. In the case $\alpha=p_A\colon A \rightarrow A$ for an abelian variety $A$ of dimension $g$, one gets
\begin{align}
\textnormal{insdeg}(p_A)=\frac{\textnormal{deg}(p_A)}{\textnormal{sepdeg}(p_A)}=\frac{p^{2g}}{|\textnormal{ker}(p_A)(k)|} \notag 
\end{align}
which can be used to determine the formal group of an elliptic curve $E$. If $E$ is ordinary, then  $\hat{E}\cong G_{1,0}$, and for supersingular elliptic curves $\hat{E} \cong G_{1,1}$ holds. As these formal groups are one-dimensional, "isogenous" can be replaced by "isomorphic" (cf. \cite{hazewinkel}, Theorem~18.5.1).

Formal groups arising from abelian varieties are far more special. Here a certain kind of symmetry condition holds which is due to Manin for finite fields (cf. \cite{manin}, Theorem~4.1) and Oort for algebraically closed fields (cf. \cite{oort}, page III.19-3): The formal group of an abelian variety $A$ can be written as a sum of the form
\begin{align}
\hat{A} \sim G_{1,0}^{\oplus r} \oplus G_{1,1}^{\oplus s} \oplus \bigoplus_{\substack{m > n \geq 1 \\ (n,m)=1 }} \left( G_{n,m} \oplus G_{m,n}  \right)^{\oplus t_{n,m}}  \notag 
\end{align}
for suitable $r, s, t_{n,m} \geq 0$, i.e. the summands $G_{n,m}$ and $G_{m,n}$ occur with the same multiplicity and $G_{n, \infty}$ does not appear.

\begin{table}
\centering
\begin{small}
\begin{tabular}{@{}cl@{}}\toprule
$\textnormal{dim}(G)$  & possible $G$ up to isogeny \\ \midrule
\parbox[0pt][1.5em][c]{0cm}{} 1 & $G_{1,0}, \ \ G_{1,1}$      \\
\parbox[0pt][1.6em][c]{0cm}{} 2 & $G_{1,0}^{\oplus 2}, \ \ G_{1,1}^{\oplus 2}, \ \ G_{1,0} \oplus G_{1,1}$    \\
\parbox[0pt][1.6em][c]{0cm}{} 3   & $G_{1,0}^{\oplus r} \oplus G_{1,1}^{\oplus 3-r}, \, 0 \leq r \leq 3, \ \, \ G_{1,2} \oplus G_{2,1}$  \\
\parbox[0pt][1.6em][c]{0cm}{} 4   & $G_{1,0}^{\oplus r} \oplus G_{1,1}^{\oplus 4-r}, \, 0 \leq r \leq 4, \ \, \  G_{1,i}\oplus G_{1,2} \oplus G_{2,1}, \ i \in \{0, 1\}, \ \ G_{1,3} \oplus G_{3,1}$  \\ \bottomrule
\end{tabular}
\end{small}
\vspace{1mm}
\caption{Formal groups of abelian varieties in small dimension}\label{formalklein}
\end{table}

\begin{prop}
Let $A$ be a $g$-dimensional abelian variety over $k$ with $p=2$.
\begin{itemize}
\item[$(i)$] If $A$ is ordinary, i.e. $A$ has $2^g$ points of order at most $2$, then $\hat{A} \cong G_{1,0}^{\oplus g}$.
\item[$(ii)$] If $A$ has $2^{g-1}$ points of order at most $2$, then  $\hat{A} \cong G_{1,1} \oplus G_{1,0}^{\oplus g-1}$.
\end{itemize}
\end{prop}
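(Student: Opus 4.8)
The plan is to first determine the isogeny class of $\hat{A}$ from the number of two-torsion points and then to upgrade this isogeny to an isomorphism. By the symmetry condition of Manin and Oort recalled above, $\hat{A}$ is isogenous to
\[
\hat{A} \sim G_{1,0}^{\oplus r} \oplus G_{1,1}^{\oplus s} \oplus \bigoplus_{\substack{m > n \geq 1 \\ (n,m)=1}} \left( G_{n,m} \oplus G_{m,n} \right)^{\oplus t_{n,m}},
\]
and since $\dim(\hat{A}) = g$ while $\dim(G_{n,m}) = n$ by property $(i)$, the exponents obey $r + s + \sum t_{n,m}(n+m) = g$. It thus suffices to pin down $r$, $s$ and the $t_{n,m}$, and the idea is to read off $r$ from the two-torsion.

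The key step is to compute the inseparable degree of multiplication by $2$ in two ways. On the one hand the formula recalled above gives $\textnormal{insdeg}(2_A) = 2^{2g}/|\textnormal{ker}(2_A)(k)|$, where the denominator is exactly the number of points of order at most $2$. On the other hand $\textnormal{insdeg}(2_A) = \textnormal{deg}(\hat{2}_A)$, and since by property $(iii)$ multiplication by $2$ on $G_{n,m}$ has degree $2^{n+m}$, while the degree of $\hat{2}$ is multiplicative over direct sums and invariant under isogeny, I get
\[
\textnormal{deg}(\hat{2}_A) = 2^{\,r + 2s + 2\sum t_{n,m}(n+m)} = 2^{2g-r},
\]
the last equality using the dimension relation to simplify the exponent. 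Comparing the two expressions yields $|\textnormal{ker}(2_A)(k)| = 2^{r}$; that is, the multiplicity $r$ of the toroidal factor $G_{1,0}$ equals the $2$-rank of $A$.

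Both cases now follow by elementary arithmetic. In case $(i)$ the hypothesis $2^{g}$ forces $r = g$, hence $s + \sum t_{n,m}(n+m) = 0$, so that $s = 0$ and all $t_{n,m} = 0$, giving $\hat{A} \sim G_{1,0}^{\oplus g}$. In case $(ii)$ the hypothesis $2^{g-1}$ forces $r = g-1$, hence $s + \sum t_{n,m}(n+m) = 1$; here the symmetry condition is decisive, since each off-diagonal pair $G_{n,m} \oplus G_{m,n}$ has $m > n \geq 1$ and therefore total dimension $n + m \geq 3$, so no $t_{n,m}$ can be positive. This leaves $s = 1$ and $\hat{A} \sim G_{1,1} \oplus G_{1,0}^{\oplus g-1}$.

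Finally I would promote these isogenies to isomorphisms. Over the algebraically closed field $k$ the (connected) formal group $\hat{A}$ splits into its reduced, multiplicative part and its local-local part; the multiplicative part is isogenous to a sum of copies of $G_{1,0}$, so Manin's rigidity statement (for reduced $G$, an isogeny $G \sim T$ upgrades to an isomorphism $G \cong T$) makes it isomorphic to $G_{1,0}^{\oplus r}$. In case $(i)$ there is no local-local part and the proof is complete, while in case $(ii)$ the local-local part is isogenous to the one-dimensional $G_{1,1}$, so the one-dimensional rigidity (isogenous one-dimensional formal groups are isomorphic) finishes the identification. I expect the arithmetic to be routine; the real work sits in this last paragraph, namely the passage from isogeny to isomorphism, which rests on the multiplicative/local-local splitting over $k$ together with the two rigidity inputs.
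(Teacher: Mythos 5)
Your proof is correct, but it reaches the conclusion by a genuinely different route than the paper. The paper never counts degrees: it writes down $\ker(2_A)$ explicitly --- $(\mathbb{Z}/2\mathbb{Z})^{\oplus g} \oplus \mu_2^{\oplus g}$ in case $(i)$, $(\mathbb{Z}/2\mathbb{Z})^{\oplus g-1} \oplus \mu_2^{\oplus g-1} \oplus N_2$ in case $(ii)$ --- and computes $\hat{A}$ directly as the limit of the local parts of the $2^n$-torsion (citing Tate), so that in case $(i)$ the isomorphism $\hat{A} \cong (\varinjlim \mu_{2^n})^{\oplus g} \cong G_{1,0}^{\oplus g}$ drops out with no isogeny-theoretic input at all; only the one-dimensional leftover factor $H$ in case $(ii)$ is identified via uniqueness of the Manin decomposition, the symmetry condition, and one-dimensional rigidity. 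You instead pin down the whole isogeny type numerically: combining $\textnormal{deg}(\hat{2}_A) = \textnormal{insdeg}(2_A) = 2^{2g}/|\ker(2_A)(k)|$ with property $(iii)$ and the dimension count shows that the multiplicity $r$ of $G_{1,0}$ equals the $2$-rank, after which the symmetry condition (each paired factor $G_{n,m}\oplus G_{m,n}$ has dimension $n+m \geq 3$) forces the answer; this arithmetic is correct. What your route buys is generality: it identifies the $G_{1,0}$-multiplicity as the $p$-rank of any abelian variety, using only the numerical formalism recalled in Section~4. What the paper's route buys is that case $(i)$ requires no passage from isogeny to isomorphism, whereas you need that passage in both cases, so your last paragraph carries real weight. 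Two steps there are left implicit and deserve a sentence each: first, the splitting $\hat{A} \cong M \oplus L$ into multiplicative and local-local parts up to \emph{isomorphism} (not merely isogeny), which holds because over the perfect field $k$ each $\ker(2^n_A)$ splits canonically into \'etale, multiplicative and local-local summands compatibly with the limit --- this is exactly the structural input the paper takes from its explicit description of $\ker(2_A)$; second, the claim that the isogeny $\hat{A} \sim G_{1,0}^{\oplus r} \oplus G_{1,1}^{\oplus s}$ restricts to isogenies $M \sim G_{1,0}^{\oplus r}$ and $L \sim G_{1,1}^{\oplus s}$ of the two summands separately, which follows since there are no nonzero homomorphisms between multiplicative and local-local formal groups (dualize and use that homomorphisms from connected to \'etale group schemes vanish). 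With these two points made explicit, your argument is complete and at the same level of rigor as the paper's own proof.
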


\begin{proof}
In both cases the group scheme $\textnormal{ker}(2_A)$ can be described explicitly as
\begin{align*}
(i) \quad (\mathbb{Z}/2 \mathbb{Z})^{\oplus g} \oplus \mu_{2}^{\oplus g}, \quad \quad   (ii) \quad (\mathbb{Z}/2 \mathbb{Z})^{\oplus g-1} \oplus \mu_{2}^{\oplus g-1} \oplus N_{2},
\end{align*}
where $N_{2}$ is a local-local group scheme. The formal group associated to $A$ now arises as limit over the local part (cf. \cite{tate}, Examples on page 166)
\begin{align*}
(i) \quad \hat{A}  \ &\cong \ \varinjlim \mu_{2^n}^{\oplus g} \ \cong \ (\varinjlim \mu_{2^n})^{\oplus g} \ \cong  \ \hat{\mathbb{G}}_m^{\oplus g}\ \cong \ G_{1,0}^{\oplus g},  \\
(ii) \quad \hat{A} \ &\cong  \ \varinjlim \left( \mu_{2^n}^{\oplus g-1} \oplus N_{2^n} \right) \ \cong \ (\varinjlim \mu_{2^n})^{\oplus g-1} \oplus \varinjlim N_{2^n} \\ &\cong \ G_{1,0}^{\oplus g-1} \oplus H.
\end{align*}
Here $H$ denotes a  one-dimensional commutative formal group. From the uniqueness of the decomposition of $\hat{A}$ up to isogeny it follows that $H$ is isogenous to $G_{1,1}$. As $H$ is one-dimensional, one can replace "isogenous" by "isomorphic".
\end{proof}

In the case $A \sim G_{1,0}^{\oplus g-2} \oplus G_{1,1}^{\oplus 2}$ the above argument does not work because there are several isomorphism classes within the isogeny class of $G_{1,1}^{\oplus 2}$. (As a consequence, there are two different types of singularities of wild Kummer surfaces with one singular point in \cite{katsura}.) The relation between the formal group of $A$ and the singularities of $A/\iota$ is as follows:

\begin{prop}\label{fgq}
Let $A$ and $B$ be abelian varieties over an algebraically closed field $k$ of characteristic $p=2$.  If the formal groups $\hat{A}$ and $\hat{B}$ are isomorphic, then the singularities of the Kummer varieties $X=A/\iota_A$ and $Y=B/\iota_B$ are formal isomorphic (meaning that the completed local rings are isomorphic).
\end{prop}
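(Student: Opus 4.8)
The plan is to show that the completed local ring at every singular point of $X$ is isomorphic to the ring of invariants of the formal group $\hat{A}$ under its inversion morphism $[-1]$, and that this ring depends only on $\hat{A}$ as a formal group. Since the singular points of $X$ are exactly the images of the $2$-torsion $k$-points of $A$ (the fixed points of $\iota$), the first task is to compute the completed local ring at the image $\bar{t} \in X$ of a fixed point $t \in A[2](k)$.

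First I would reduce to the identity element. Translation $\tau_t \colon A \to A$, $x \mapsto x+t$, carries $0$ to $t$ and induces an isomorphism $\hat{\mathcal{O}}_{A,t} \cong \hat{\mathcal{O}}_{A,0} = \hat{A}$ of the completed local rings. Transporting the involution $\iota$ through this isomorphism amounts to computing $\tau_{-t} \circ \iota \circ \tau_t$ on the formal neighbourhood of the origin; on points this is $x \mapsto -(x+t) - t = -x$, since $2t = 0$. Hence, under the translation isomorphism, the pair $(\hat{\mathcal{O}}_{A,t}, \iota)$ is identified with $(\hat{A}, [-1])$, where $[-1]$ is the inversion morphism of the formal group. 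In particular the local picture is the \emph{same} at every fixed point, so all singular points of $X$ have isomorphic completed local rings.

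Next I would identify the completed local ring of the quotient with the invariants. Choosing an $\iota$-invariant affine neighbourhood of $t$ and using that $t$ is the unique point in its $\iota$-orbit, one gets $\mathcal{O}_{X, \bar{t}} = (\mathcal{O}_{A,t})^\iota$, and it remains to commute completion with the formation of invariants, i.e. to show $\widehat{(\mathcal{O}_{A,t})^\iota} \cong (\hat{\mathcal{O}}_{A,t})^\iota$. This follows by applying the exact $\mathfrak{m}$-adic completion functor to the equalizer sequence $0 \to R^\iota \to R \xrightarrow{\iota - 1} R$ with $R = \mathcal{O}_{A,t}$; here one uses that $R$ is module-finite over $R^\iota$ (so that the $\mathfrak{m}$-adic and $(\mathfrak{m} \cap R^\iota)$-adic topologies on $R$ agree) together with flatness of completion over the Noetherian local ring $R^\iota$. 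Combining the two steps gives $\hat{\mathcal{O}}_{X, \bar{t}} \cong (\hat{A})^{[-1]}$.

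Finally, I would conclude. An isomorphism of formal groups $\hat{A} \xrightarrow{\sim} \hat{B}$ is by definition compatible with the group laws and therefore intertwines the inversion morphisms $[-1]_A$ and $[-1]_B$; restricting to invariants yields $(\hat{A})^{[-1]} \cong (\hat{B})^{[-1]}$. Running the identification above for both $A$ and $B$ then shows that the completed local ring at any singular point of $X$ is isomorphic to that at any singular point of $Y$, which is the assertion. I expect the main obstacle to be the second step, the commutation of completion with taking invariants: the action is wild (the differential of $[-1]$ at a fixed point is the identity, since $-1 = 1$ in characteristic $2$, so the invariant ring is genuinely singular and no averaging operator is available), whereas everything else is a formal consequence of the translation argument.
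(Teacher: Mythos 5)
Your proposal is correct and follows essentially the same route as the paper: identify the involution on the completed local ring at a fixed point with the inversion of the formal group, observe that an isomorphism of formal groups intertwines the two inversions and hence carries invariants to invariants, and use that completion commutes with taking invariants. You in fact supply two details the paper leaves implicit, namely the translation argument reducing every singular point to the identity element, and the equalizer-plus-flatness justification that completion commutes with invariants, which the paper only asserts.
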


\begin{proof}
We denote by $\phi\colon \hat{A} \rightarrow \hat{B}$ the isomorphism. The schemes $A$ and $B$ have the same dimension, hence $\hat{A} \cong \hat{B} \cong \textnormal{Spf}(R)$ as formal spectra with  $R= k\llbracket T_1, \ldots, T_g \rrbracket$.
We obtain automorphisms  $\phi^{*}$, $\iota_A^{*}$, $\iota_B^{*}$ of   $R$ induced by $\phi$ and the sign involutions of $A$ and $B$. Moreover, the  automorphisms $\iota_A^{*}$, $\iota_B^{*}$ coincide with the  induced automorphisms of the completed local rings $\hat{\mathscr{O}}_{A,e_A}$ and $\hat{\mathscr{O}}_{B,e_B}$.

It suffices to show that the invariant rings $R^{\iota_A^{*}}$ and $R^{\iota_B^{*}}$ are isomorphic, because  taking invariants "commutes" with completion. From the compatibility with the group laws one immediately gets $\iota_A^{*}= \phi^{*} \circ \iota_B^{*} \circ (\phi^{*})^{-1}$.  We now show  $R^{\iota_A^{*}}=\phi^{*}(R^{\iota_B^{*}})$: If $f \in R^{\iota_B^{*}}$ is an arbitrary element, then
\begin{align*}
\iota_A^{*}(\phi^{*}(f)) = \phi^{*} \circ \iota_B^{*} \circ (\phi^{*})^{-1} \circ \phi^{*}(f) = \phi^{*}(f),
\end{align*}
so $\phi^{*}(f) \in R^{\iota_A^{*}}$. Switching the roles of $\phi^{*}$ and ${\phi^{*}}^{-1}$, one gets  the inclusion ${\phi^{*}}^{-1}(R^{\iota_A^{*}}) \subset R^{\iota_B^{*}}$ with the same argument. Apply $\phi^{*}$ to complete the proof.
\end{proof}

\begin{cor}\label{gleichesing}
Let $A$ be a $g$-dimensional abelian variety over an algebraically closed field of characteristic $p=2$ and denote by $2^r= |\textnormal{ker}(2_A)(k)|$ the number of $2$-torsion points of $A$. We define  $B=E_1 \times \ldots \times E_g$ as the product of $r$ ordinary and $g-r$ supersingular elliptic curves. If $r=g$ or $r=g-1$ holds, then the singularities of the Kummer varieties of $A$ and $B$ are formal isomorphic.
\end{cor}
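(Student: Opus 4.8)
The plan is to verify the hypothesis of Proposition~\ref{fgq}, namely that the formal groups $\hat{A}$ and $\hat{B}$ are isomorphic; the conclusion that the Kummer singularities are formal isomorphic then follows at once. Since $A$ and $B$ both have dimension $g$, the entire task reduces to identifying $\hat{A}$ and $\hat{B}$ \emph{up to isomorphism} (not merely up to isogeny) and checking that they agree.

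First I would read off $\hat{A}$ from the preceding proposition on formal groups of abelian varieties. If $r=g$, then $A$ is ordinary and $\hat{A}\cong G_{1,0}^{\oplus g}$; if $r=g-1$, then $A$ has $2^{g-1}$ points of order at most $2$ and $\hat{A}\cong G_{1,0}^{\oplus g-1}\oplus G_{1,1}$. Next I would compute $\hat{B}$. Since the formal group of a product of abelian varieties is the direct sum of the formal groups of the factors, and since an ordinary elliptic curve has formal group $G_{1,0}$ whereas a supersingular one has formal group $G_{1,1}$, the product $B$ of $r$ ordinary and $g-r$ supersingular elliptic curves satisfies $\hat{B}\cong G_{1,0}^{\oplus r}\oplus G_{1,1}^{\oplus g-r}$.

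Specializing to the two admissible values of $r$ gives $\hat{B}\cong G_{1,0}^{\oplus g}$ when $r=g$ and $\hat{B}\cong G_{1,0}^{\oplus g-1}\oplus G_{1,1}$ when $r=g-1$, so $\hat{A}\cong\hat{B}$ in each case. Applying Proposition~\ref{fgq} then completes the proof.

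The one delicate point --- and the precise reason for restricting to $r\in\{g,g-1\}$ --- is that Proposition~\ref{fgq} genuinely requires an \emph{isomorphism} of formal groups, not just an isogeny, whereas Manin's classification a priori only controls the isogeny class. In both admissible cases the only potentially troublesome local-local summand $G_{1,1}$ appears with multiplicity at most one, hence is one-dimensional, and for one-dimensional formal groups isogeny already implies isomorphism. For $r\le g-2$ one would instead meet $G_{1,1}^{\oplus 2}$ or larger, where a single isogeny class splits into several isomorphism classes; this is exactly the obstruction flagged before Proposition~\ref{fgq} and the source of the distinct wild Kummer-surface singularity types.
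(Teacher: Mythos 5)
Your proposal is correct and follows exactly the route the paper intends (the corollary is stated without an explicit proof, but the implicit argument is precisely this): read off $\hat{A}$ from the preceding proposition, identify $\hat{B}\cong G_{1,0}^{\oplus r}\oplus G_{1,1}^{\oplus g-r}$ from the elliptic factors, and apply Proposition~\ref{fgq}. Your closing remark on why one-dimensionality of the $G_{1,1}$ summand lets isogeny be upgraded to isomorphism, and why this fails for $r\le g-2$, matches the paper's own discussion of the case $\hat{A}\sim G_{1,0}^{\oplus g-2}\oplus G_{1,1}^{\oplus 2}$.
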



Hence, the description of the singularities of Kummer varieties arising from products of elliptic curves includes the case of arbitrary Kummer varieties with $2^g$ (ordinary case) or $2^{g-1}$ singular points.

\section{Open questions}

In this last section we discuss some questions that arise naturally through the description of the singularities and their open neighbourhood.

\medskip

\noindent
\emph{Embedding dimension}. For Kummer varieties $X$ arising from products of elliptic curves or ordinary abelian varieties, the embedding dimension of the singularities is always $2^g -1$, where $g$ denotes the dimension of $X$.  Now it is natural to ask whether this result still holds for arbitrary abelian varieties.
\begin{question}
Given a Kummer variety, is the embedding dimension of the singularities always $2^g-1$?
\end{question}

\medskip

\noindent
\emph{Rationality problem}. In dimension $g \leq 1$, every Kummer variety is isomorphic to $\mathbb{P}^0_k$ or $\mathbb{P}^1_k$, hence rational. The Kummer surface of a supersingular abelian variety is rational as well. This was proved by Shioda \cite{shioda} in the case of a product of supersingular elliptic curves and generalized by Katsura  \cite{katsura}. We call an abelian variety $A$ and its Kummer variety $A/\iota$ \emph{superspecial} if $A$ is isomorphic to a product of supersingular elliptic curves.

As soon as we have an open affine subscheme of the Kummer variety $X$, we can examine its function field and ask again whether the variety is rational. Shioda gives an explicit computation of the function field of the superspecial Kummer surface and gets $K=k(V_1, V_2, T)= k(r,s)$, where
\begin{align*}
 V_1 = \frac{r^2+s}{\omega r^2 s + r + \omega^2 s^2}, \ \quad V_2= \frac{r^2+s}{\omega^2 r^2 s + r + \omega s^2}, \ \quad T = V_1^2 V_2^2 r \left(V_1^{-1}+ \omega s\right)
\end{align*}
and $\omega$ denotes a primitive third root of unity.

For the superspecial Kummer threefold, we can argue in a similar way. It has  function field $K=k(V_1, V_2, V_3, T_{\{1,2\}}, T_{\{1,3\}}, T_{\{2,3\}}, T_{\{1,2,3\}})$ with the relations from Proposition~\ref{endlerzkalg} between the generators.  As we have
\begin{align*}
 T_{\{2,3 \}}= \frac{V_2^2 T_{\{1,3\}} + V_3^2 T_{\{1,2\}}+V_1^2 V_2^2 V_3^2}{V_1^2}, \quad \quad T_{\{1, 2, 3\}} = \frac{T_{\{1,2\}} T_{\{1,3\}}+ V_1 V_2^2 V_3^2}{V_1^2},
\end{align*}
these indeterminates can be omitted. Now for $i=2,3$ we can consider the two subfields $K_{i}=k(V_1, V_i, T_{\{1,i \}}) = k(r_i, s_i)$ of $K$ which are rational function fields, i.e. the indeterminates $V_1$, $V_2$, $V_3$, $T_{\{1,2\}}$, $T_{\{1,3\}}$ can be interpreted as rational functions in $r_2, s_2$ or $r_3, s_3$. As $V_1$ is contained in both $K_2$ and $K_3$, one gets the relation
\begin{align*}
 \frac{r_2^2+s_2}{\omega r_2^2 s_2 + r_2 + \omega^2 s_2^2} =  \frac{r_3^2+s_3}{\omega r_3^2 s_3 + r_3 + \omega^2 s_3^2} \quad  \left( = V_1 \right)
\end{align*}
in $K=k(r_2, s_2, r_3, s_3)$. It is unclear, whether the field $K$ is of the form $k(a,b,c)$.
\begin{question}
Is the superspecial Kummer variety (uni-)rational for some $g>2$?
\end{question}

\medskip

\noindent
\emph{Products of Artin--Schreier curves}. Instead of considering products of elliptic curves one can also study products of Artin--Schreier curves which are given by affine equations of the form $y^p-x^{p-1} y -(x+\alpha_2 x^2 + \ldots + \alpha_p x^p) $ over a ground field of characteristic $p$. Now $\mathbb{Z}/p\mathbb{Z}=\langle \sigma \rangle$ acts on the curve by $\sigma(x,y)=(x, y+x)$, having the fixed point $P=(0,0)$. Hence, the quotient of a product $C_1 \times \ldots \times C_n$ of these curves by the diagonal action has a singular point at the origin. As the group action is of the form described in Proposition~\ref{sr}, one can again reduce the problem of computing the quotient to the case of $\mathbb{Z}/p\mathbb{Z}$ acting on $\mathbb{A}^{2n}_k$ and use the result of Campbell and Hughes (Proposition~\ref{campbell_erz}).   Unfortunately, the relations between the generators are unknown for $p \neq 2$ and $n \geq 3$. In the case $n=2$ one gets the singularity from \cite{ito}, Proposition~2.2.
\begin{question}
What can be said about the singularities if $n \geq 3$? 
\end{question}

\bigskip

\end{document}